\newtheorem{theorem}{Theorem}
\newtheorem{lemma}{Lemma}
\renewcommand{\@biblabel}[1]{#1.}
\begin{document}
	
\title{\color{red}\textbf{Finding Dynamics for Fractals}}
\author[1]{\textbf{Marat Akhmet}\thanks{Corresponding Author Tel.: +90 312 210 5355, Fax: +90 312 210 2972, E-mail: marat@metu.edu.tr}$^{,}$}
\author[2]{\textbf{Mehmet Onur Fen}}
\author[1]{\textbf{Ejaily Milad Alejaily}}
\affil[1]{\textbf{Department of Mathematics, Middle East Technical University, 06800 Ankara, Turkey}}
\affil[2]{\textbf{Department of Mathematics, TED University, 06420 Ankara, Turkey}}

\date{}
\maketitle
	
\noindent \textbf{Abstract.}
The famous Laplace's Demon is not only of strict physical determinism, but also related to the power of differential equations. When deterministically extended structures are taken into consideration, it is admissible that fractals are dense both in the nature and in the dynamics. In particular, this is true because fractal structures are closely related to chaos. This implies that dynamics have to be an instrument of the extension. Oppositely, one can animate the arguments for the Demon if dynamics will be investigated with fractals. To make advances in the direction, first of all, one should consider fractals as states of dynamics. In other words, instead of single points and finite/infinite dimensional vectors, fractals should be points of trajectories as well as trajectories themselves. If one realizes this approach, fractals will be proved to be dense in the universe, since modeling the real world is based on differential equations and their developments. Our main goal is to initiate the involvement of fractals as states of dynamical systems, and in the first step we answer the simple question ``How can fractals be mapped?''. In the present study Julia and Mandelbrot sets are considered as initial points for the trajectories of the dynamics. 
   
\noindent \textbf{Keywords.} Fatou-Julia Iterations; Julia and Manderbrot Sets; Fractal Mapping Theorem; Discrete and Continuous Fractal Dynamics

\begin{multicols}{2}
    
\section*{\normalsize \color{red}INTRODUCTION}
French mathematicians P. Fatou and G. Julia invented a special iteration in the complex plane \textit{\cite{Julia,Fatou}} such that new  geometrical objects  with unusual properties can be built. One of the famous is the Julia set. Later B. Mandelbrot suggested to call them fractals \textit{\cite{Mandelbrot0}}. Famous fractals appeared as self-similar objects were introduced by Georg Cantor in 1883, and Koch snowflake was discovered in 1904.  

Fractals are in the forefront of researches in many areas of science as well as for interdisciplinary investigations \textit{\cite{Barnsley1,Mitchell}}. One cannot say that motion is a strange concept for fractals. Dynamics are  beside the fractals immediately as they are constructed by iterations. It is mentioned in the book \textit{\cite{Peitgen04}} that it is inadequate to talk about fractals while ignoring the dynamical processes which created them. That is, iterations are in the basis of any fractal, but we still cannot say that differential equations are widely interrelated to fractals, for instance, as much as manifolds \textit{\cite{Sternin}}. Our present study is intended to open a gate for an inflow of methods of differential equations, discrete equations and any other methods of dynamics research to the realm of fractals. This will help not only to investigate fractals but also to make their diversity richer and ready for intensive applications. Formally speaking, in our investigation we join dynamics of iterations, which can be called inner dynamics, with outer dynamics of differential and discrete equations. We are strongly confident that what we have suggested can be in the basement of fractals through dynamical systems, differential equations, optimization theory, chaos, control, and other fields of mathematics related to the dynamics investigation. The concept of fractals has already many applications, however, the range would be significantly enlarged if all the power of differential equations will be utilized for the structures. This is why our suggestions are crucial for fractals considered in biology, physics, city planning, economy, image recognition, artificial neural networks, brain activity study, chemistry, and all engineering disciplines \textit{\cite{Batty,Kaandorp,Zhao,Takayasu,Pietronero,Vehel}}, i.e. in every place, where the geometrical objects in physical and/or abstract sense may appear \textit{\cite{Fatou}}.

Significant results on the relation of fractals with differential equations were obtained by Kigami \textit{\cite{Kigami}} and Strichartz \textit{\cite{Strichartz}}. In the books \textit{\cite{Kigami,Strichartz}}, fractals were considered as domains of partial differential equations, but not as building blocks of trajectories. In this sense one can take advantage of the results of \textit{\cite{Kigami,Strichartz}} in the next development of our proposals. The same is true for the studies concerned with deep analyses of fractals growth performed in \textit{\cite{Vicsek,Barabasi}}.

The most important fractals for our present study are Julia sets which were discovered in 1917-1918 by Julia and Fatou, both of whom independently studied the iteration of rational functions in the complex plane \textit{\cite{Mandelbrot2}}. They established the fundamental results on the dynamics of complex functions published in their papers \textit{\cite{Julia,Fatou}}. In 1979, Mandelbrot visualized Julia sets. Although they are constructed depending on the dynamics of simple complex polynomials, the Mandelbrot set has a complicated boundary, and the Julia sets associated with points close to the boundary usually have amazing and beautiful structures.

Julia and Mandelbrot fractals are very great achievements for set theory, topology, functions theory, chaos, and real world problems. Therefore, studies in the area has to be at the frontiers of modern sciences and applications. Additionally, the development of researches on the basis of fractals is of significant importance to any possible direction starting from the basis of set theory.

In general, a fractal is defined as a set that displays self similarity and repeats the same patterns at every scale \textit{\cite{Franceschetti}}. Mandelbrot \textit{\cite{Mandelbrot1}} defined  fractals as sets for which the Hausdorff dimension is either fractional or strictly exceeds the topological dimension. Based on this, self-similarity can occur in fractals, but, in general, a fractal need not be self-similar. The easiest way to indicate  that a set has fractional dimension is through self-similarity \textit{\cite{Crownover}}.

Besides the iteration of rational maps, there are various ways to construct fractal shapes. The well known self-similar fractals like Sierpinski gasket and Koch curve are constructed by means of a simple recursive process which consists in iteratively removing shrinking symmetrical parts from an initial shape. These types of geometrical fractals can also be produced by an iterated function system \textit{\cite{Hutchinson,Barnsley}}, which is defined as collections of affine transformations.

In our paper, we make a possible study of mapping fractals, which is simple from one side since it relates to classical functions, but from another side it is a developed one since we apply the mapping function in a new manner, which nevertheless still relates to the original ideas of Julia. 

There are two sides of the fractal research related to the present paper. The first one is the Fatou-Julia Iteration (FJI) and the second one is the proposal by Mandelbrot to consider dimension as a criterion for fractals. In our study, both factors are crucial as we apply the FJI for the construction of the sets and the dimension factor to confirm that the built sets are fractals. In previous studies the iteration and the dimension factors were somehow separated, since self-similarity provided by the iteration has been self-sufficient to recognize fractals, but in our research the similarity is not true in general. We have to emphasize that there is a third player on the scene, the  modern state of computers' power. Their roles are important for the realization of our idea exceptionally for continuous dynamics. One can say  that the instrument is at least of the same importance for application of our idea to fractals as for realization of Fatou-Julia iteration in Mandelbrot and Julia sets. Nevertheless, we expect that the present study will significantly increase the usage of computers for fractal analysis. Moreover, beside differential equations, our suggestions will effect the software development for fractals investigation and applications \textit{\cite{Encarnacao, Peitgen}}.

Studying the problem, we have found that fractal-like appearances can be observed in ancient natural philosophy. Let us consider the Achilles and tortoise in the Zeno's Paradox \textit{\cite{Goswami}}, (see Fig. \ref{TorAchill}).

\end{multicols}
\begin{figure}[H]
\centering	
{\setlength{\fboxsep}{0pt}%
\setlength{\fboxrule}{1.5pt}%
\fcolorbox{red}{white}{
	\begin{minipage}{0.48\textwidth}
		\vspace{0.2cm} \hspace{-1.3cm}
		\centering
		\subfigure[Achilles and the tortoise dynamics]{\includegraphics[width = 2.20in]{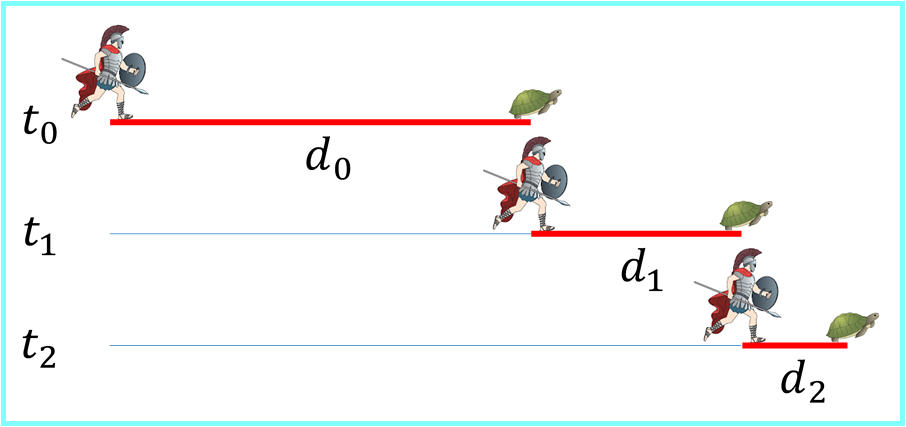}\label{TorAchill}} \\
		\vspace{-0.0cm} \hspace{-1.5cm}	
		\subfigure[The state, $S_0$, of the dynamics ]{\includegraphics[width = 2.85in]{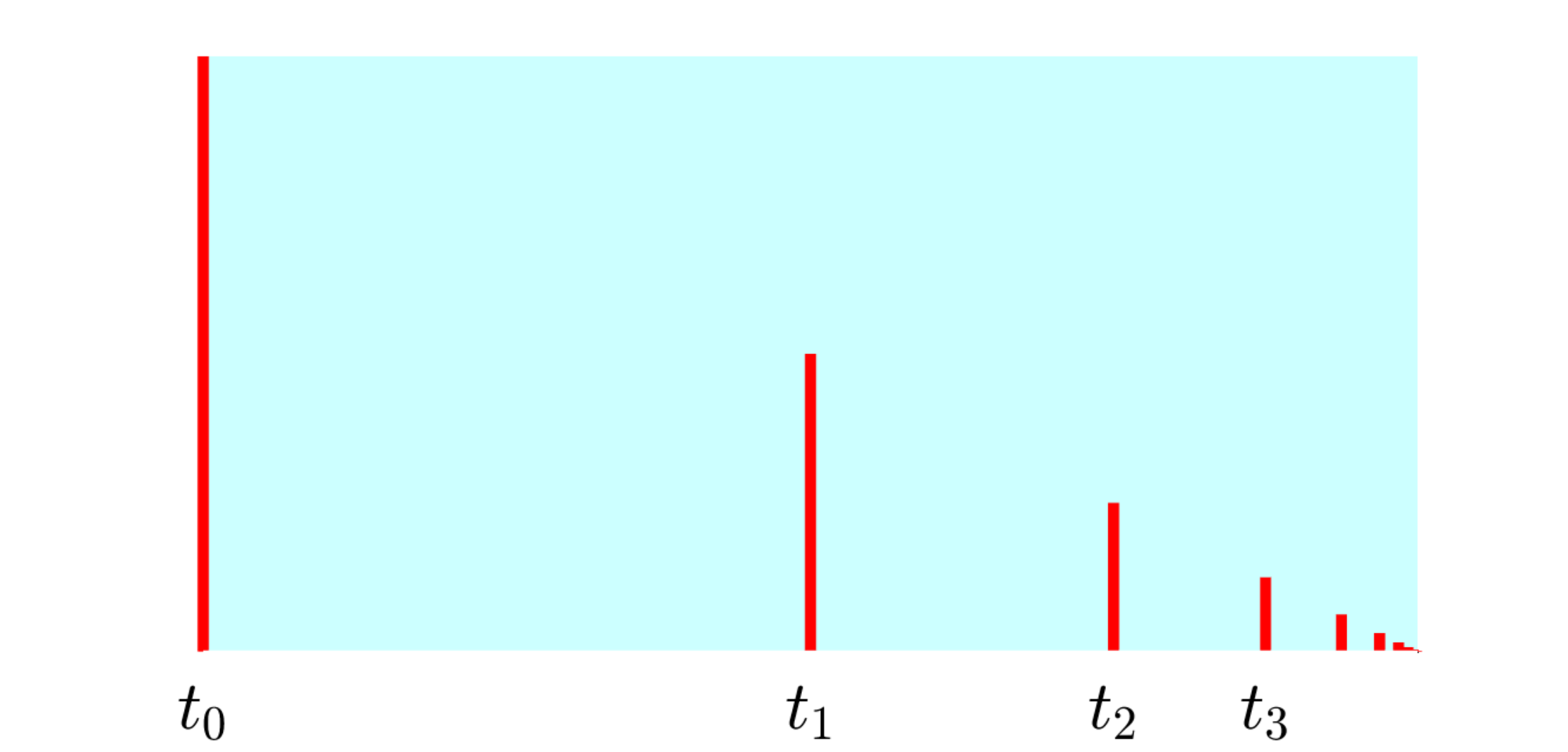}\label{TorAchDiag}}
	\end{minipage}
	\hspace{-1.2cm}
	\begin{minipage}{0.50\textwidth}
		\centering 				
		\subfigure[Pseudo-fractal trajectory]{\includegraphics[width = 3.5in]{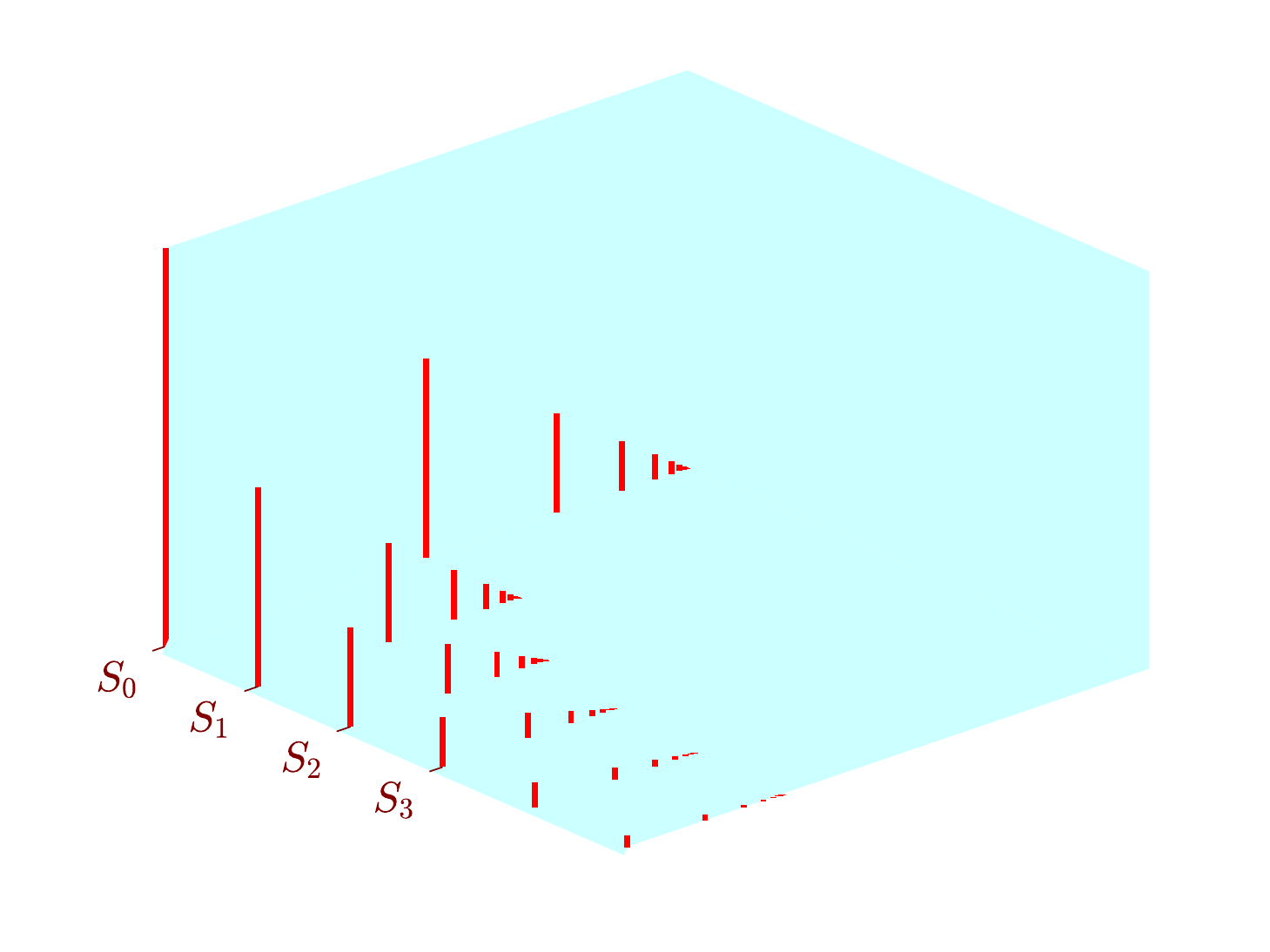}\label{UnionFrac}}
	\end{minipage}}	}
\caption{The dynamics of the Zeno's Paradox}
\label{TorAchillParadox}
\end{figure}
\begin{multicols}{2}

In the paradox, Achilles is observed at the initial moment $ t_0=0 $ with the distance $ d_0 $ from the tortoise. Suppose that Achilles runs at a constant speed, two times faster than the tortoise, then he would reach the previous position of the tortoise at moments $ t_1, \, t_2=3 t_1/2, \, t_3=7 t_1/4, ... $ with distances $ d_1=d_0/2, \, d_2=d_0/4, \, d_3=d_0/8, ... $ from the tortoise, respectively. Now contemplate Fig. \ref{TorAchDiag}, where the heights of the red lines are proportional to the distance of Achilles from the tortoise at the fixed moments, and denote the diagram by $S_0$. The set $S_0$ demonstrates the entire dynamics for $t \ge t_0.$ Fix $i \ge 0,$ and let $S_i$ be the similar diagram which consists of all the lines for the moments which are not smaller than $t_{i}$.
	
Let us consider the collection of the states $ \{S_i\}, i \ge 0 $. One can assume that there exists a map $\mathcal{B}$ such that the equations
\[ S_{i+1}= \mathcal{B} \, S_i, \; i = 0,1,2,\ldots, \]
which symbolize a dynamics, are valid. It is easily seen that $S_0$ is self-similar to each of its parts $S_i$, $i>0$. Nevertheless,  the Hausdorff dimension of the set $ S_0$ is equal to one. For this reason, we call  $S_i$, $i \ge 0,$ \textit{pseudo-fractals}, due to the similarity. The trajectory $\{S_i\}, i\geq 0,$ is also a pseudo-fractal. The sketch of the trajectory is seen in Fig. \ref{UnionFrac}.

Our present study is an extension of the ancient paradigm, since we will investigate dynamics having all points of a trajectory as well as the trajectory itself fractals.
	
\section*{\normalsize \color{red}  FATOU-JULIA ITERATIONS}

The core of the present study is built through FJI and the quadratic map $ P: \mathbb{C} \times \mathbb{C} \rightarrow \mathbb{C} $ defined by 	
\begin{equation} \label{MainQuadPoly}
P(z,c)=z^2+c,
\end{equation}
where $ \mathbb{C} $ denotes the set of complex numbers.

Consider the equation  $ z_{n+1}=P(z_n,c), \; n \geq 0 $. The points $ z_0 \in \mathbb{C} $ are included in a fractal $ \mathcal{F} $ depending on the boundedness of the sequence $z_n,$ and we say that the fractal $ \mathcal{F} $ is constructed by FJI. The so-called filled-in Julia set, $ \mathcal{K}_c $, is constructed by including only the points $ z_0 \in \mathbb{C} $ such that the sequence $z_n$ is bounded \textit{\cite{Ahlfors}}. Moreover, in the simulation, those points $ z_0 \in \mathbb{C} $ where $ \{z_n\} $ is divergent are colored in a different way, correspondingly to the rate of divergence. The term Julia set $ \mathcal{J}_c $,  usually denotes the boundary of the filled Julia set, i.e., $ \mathcal{J}_c= \partial \mathcal{K}_c $.

The Mandelbrot set, $ \mathcal{M} $, is also generated by a Fato-Julia iteration. In this case, we consider the equation $ z_{n+1}=P(z_n,c) $,  and include in the set $ \mathcal{M} $ the points $ c \in \mathbb{C} $ such that  $ \{z_n(c)\}, \, z_0(c)=0, $ is bounded.  Here again, the points $ c \in \mathbb{C} $ corresponding to divergent sequences $z_n$ are plotted in  various colors depending  on the rate of the divergence.  

\section*{\normalsize \color{red}HOW TO MAP FRACTALS}
	
To describe our way for mapping of fractals, let us consider a fractal set $ \mathcal{F} \subseteq A \subset \mathbb C, $ constructed by the following FJI, 
\begin{equation} \label{F-J It}
	z_{n+1}=F(z_n),
\end{equation}
where  $ F: A  \rightarrow A $ is  not  necessarily  a rational map.  We suggest  that the original  fractal  $ \mathcal{F} $  can be transformed ``recursively'' into a new fractal set. For that purpose, we modify the  FJI, and consider iterations to be of the form
\begin{equation} \label{Map It}
	f^{-1}(z_{n+1})=F\big(f^{-1}(z_n)\big),
\end{equation}
or explicitly,
\begin{equation} \label{Map Itd}
	z_{n+1}=f\Big(F\big(f^{-1}(z_n)\big)\Big), 
\end{equation}
where $f$ is a one-to-one map on $ A $. Next, we examine the convergence of the sequence $ \{ z_n \} $ for each $ z_0 \in f(A). $ Denote by $\mathcal{F}_f$ the  set which contains only the points $ z_0 $ corresponding to the bounded sequences. Moreover, other points can be plotted in different colors depending  on the rate of the divergence of $ \{ z_n \} $. To distinct the iterations 	(\ref{Map Itd}) from the Fatou-Julia iterations let us call the first ones {\it Fractals Mapping Iterations} (FMI). It is clear that FJI is a particular FMI, when the function is the identity map. The mapping of fractals is a difficult problem which depends on infinitely long iteration processes, and has to be accompanied with sufficient conditions to  ensure  that the  image is again fractal. 

The next theorem is the main instrument for the detection of fractal mappings. Accordingly, we call it {\it Fractal  Mapping Theorem} (FMT).
 
\begin{theorem} \label{Thm2}
	If $ f $ is a bi-Lipschitz function, i.e. there exist numbers $ l_1, l_2 > 0 $ such that
	\begin{equation}  \label{Bi-Lip cond}
	l_1 |u-v| \leq |f(u)-f(v)| \leq l_2 |u-v|
	\end{equation}
	for all $ u, v \in A,$   then $ \mathcal{F}_f=f(\mathcal{F}) $.
\end{theorem}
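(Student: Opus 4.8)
The plan is to exploit the fact that the Fractals Mapping Iteration (\ref{Map Itd}) is, by construction, conjugate to the original Fatou--Julia iteration (\ref{F-J It}) via the one-to-one map $f$, and then to show that the bi-Lipschitz hypothesis is exactly what is needed to push boundedness of orbits back and forth across this conjugacy. First I would check that for any $z_0\in f(A)$ the sequence $\{z_n\}$ defined by (\ref{Map Itd}) is well posed and never leaves $f(A)$: since $f^{-1}$ is defined on $f(A)$ and $F$ maps $A$ into $A$, at each step $f^{-1}(z_n)\in A$, then $F(f^{-1}(z_n))\in A$, and therefore $z_{n+1}=f\big(F(f^{-1}(z_n))\big)\in f(A)$. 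Putting $w_n:=f^{-1}(z_n)$, equation (\ref{Map It}) becomes $w_{n+1}=F(w_n)$, so $\{w_n\}$ is precisely the FJI orbit issued from $w_0=f^{-1}(z_0)$, and a one-line induction gives $z_n=f(w_n)$ for all $n\ge 0$.

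The second step transfers boundedness using (\ref{Bi-Lip cond}). Fix $z_0\in f(A)$ and the corresponding $w_0\in A$ with $f(w_0)=z_0$. If $\{w_n\}$ is bounded, then $|z_n|=|f(w_n)|\le|f(w_0)|+l_2|w_n-w_0|$ remains bounded, so $\{z_n\}$ is bounded; only the upper inequality of (\ref{Bi-Lip cond}) is used here, and this direction yields $f(\mathcal{F})\subseteq\mathcal{F}_f$. Conversely, if $\{z_n\}$ is bounded, then applying the lower inequality of (\ref{Bi-Lip cond}) to $u=f^{-1}(z_n)$, $v=f^{-1}(z_0)$ gives $l_1|w_n-w_0|\le|z_n-z_0|$, hence $|w_n|\le|w_0|+l_1^{-1}|z_n-z_0|$ is bounded, so $\{w_n\}$ is bounded; this direction yields $\mathcal{F}_f\subseteq f(\mathcal{F})$. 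In short, along a common index the orbit $\{z_n\}$ is bounded if and only if its conjugate orbit $\{w_n\}$ is bounded.

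Combining the two steps finishes the proof. For $z_0\in f(A)$, write $w_0=f^{-1}(z_0)\in A$; then $z_0\in\mathcal{F}_f$ exactly when $\{z_n\}$ is bounded, which by the second step holds exactly when $\{w_n\}$ is bounded, i.e. exactly when $w_0\in\mathcal{F}$. Because $f$ is a bijection of $A$ onto $f(A)$ and $\mathcal{F}\subseteq A$, the collection of all such $z_0=f(w_0)$ is precisely $f(\mathcal{F})$, so $\mathcal{F}_f=f(\mathcal{F})$.

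I do not anticipate a genuinely hard step; the statement is essentially a conjugacy lemma. The places that require care — and where most of the written proof will go — are the domain bookkeeping (verifying that $f^{-1}$ is only ever applied to points of $f(A)$, that the FMI orbit stays in $f(A)$, and that injectivity of $f$ makes $w_0$ uniquely determined by $z_0$), and keeping straight which half of (\ref{Bi-Lip cond}) drives which of the two inclusions. It is worth emphasising that continuity of $f$ and $f^{-1}$ alone would not be enough, since boundedness of an orbit is not preserved by arbitrary homeomorphisms; the Lipschitz estimates on both $f$ and $f^{-1}$ are precisely the quantitative control that makes the equivalence of boundedness — and hence the identity $\mathcal{F}_f=f(\mathcal{F})$ — go through.
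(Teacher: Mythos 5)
Your proposal is correct and follows essentially the same route as the paper's own proof: it conjugates the FMI orbit to the FJI orbit via $f$ (the paper writes $z_k=f^{-1}(w_k)$, you write $w_n=f^{-1}(z_n)$) and transfers boundedness in the two directions using, respectively, the lower and upper Lipschitz estimates in (\ref{Bi-Lip cond}). The extra domain bookkeeping you include is a harmless refinement of the same argument, so no further comparison is needed.
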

\begin{proof}
	Fix an arbitrary $w \in \mathcal{F}_f$. There exists a bounded sequence $ \{w_k\} $ such that	$ w_0=w $ and $ f^{-1}(w_{k+1})=F(f^{-1}(w_k) $. Let us denote $ z_k=f^{-1}(w_k) $. Our purpose is to show that $\left\{ z_k \right\}$ is a bounded sequence. Indeed
	\[ |z_k-z_0|= |f^{-1}(w_k) -f^{-1}(w_0)| \leq \frac{1}{l_1}|w_k-w_0|. \]
	Hence, the boundedness of $ \{w_k\} $ implies the same property for $ \{z_k\} $, and therefore, we have $ z_0=f^{-1}(w) \in \mathcal{F} $.
		
	Now, assume that $ w \in f(\mathcal{F}) .$ There is $ z \in \mathcal{F} $  such that $ f(z)=w $ and a bounded sequence $ \{z_k\} $ such that 	$ z_0=z $ and $ z_{k+1}=F(z_k) $. Consider, $ w_0 =w $ and $ w_{k}=f(z_k), \, k\geq0 $. It is clear that the sequence $ \{w_k\} $ satisfies the iteration (\ref{Map It}) and moreover
	\[ |w_k-w_0|= |f(z_k) -f(z_0)| \leq l_2 |z_k-z_0|. \]
	Consequently, $ \{w_k\} $ is bounded, and $ w \in \mathcal{F}_f $.
\end{proof}
	
The following two simple propositions  are required.
	
\begin{lemma} \label{Lem1} \cite{Falconer}
	If $f$ is a bi-Lipschitz function, then 
	\[ \dim_{H} f(A)  = \dim_{H} A, \] where $\dim_H$ denotes the Hausdorff dimension.
\end{lemma}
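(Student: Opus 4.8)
The plan is to establish the two inequalities $\dim_H f(A) \le \dim_H A$ and $\dim_H A \le \dim_H f(A)$ separately, both as consequences of a single elementary observation about how a Lipschitz map distorts Hausdorff measure. First I would recall the setup: for $s \ge 0$ and $\delta > 0$, $\mathcal{H}^s_\delta(E)$ denotes the infimum of $\sum_i (\operatorname{diam} U_i)^s$ over all countable covers $\{U_i\}$ of $E$ with $\operatorname{diam} U_i \le \delta$, the Hausdorff measure is $\mathcal{H}^s(E) = \lim_{\delta \to 0^+} \mathcal{H}^s_\delta(E)$, and $\dim_H E$ is the unique exponent at which $\mathcal{H}^s(E)$ drops from $+\infty$ to $0$.

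The key step is: if $g$ satisfies $|g(u)-g(v)| \le c\,|u-v|$ for all $u,v$ in its domain, then $\mathcal{H}^s(g(E)) \le c^s\,\mathcal{H}^s(E)$ for every $s \ge 0$ and every $E$ in the domain. To see this, take any $\delta$-cover $\{U_i\}$ of $E$; then $\{g(E \cap U_i)\}$ covers $g(E)$ and $\operatorname{diam} g(E \cap U_i) \le c\,\operatorname{diam}(E\cap U_i) \le c\,\delta$, so $\sum_i (\operatorname{diam} g(E\cap U_i))^s \le c^s \sum_i (\operatorname{diam} U_i)^s$. Taking the infimum over covers gives $\mathcal{H}^s_{c\delta}(g(E)) \le c^s\,\mathcal{H}^s_\delta(E)$, and letting $\delta \to 0^+$ proves the claim. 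In particular, if $\mathcal{H}^s(E) = 0$ then $\mathcal{H}^s(g(E)) = 0$, hence $\dim_H g(E) \le \dim_H E$ whenever $g$ is Lipschitz on $E$.

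Finally I would apply this twice. The right inequality in (\ref{Bi-Lip cond}) says $f$ is Lipschitz on $A$ with constant $l_2$, so $\dim_H f(A) \le \dim_H A$. The left inequality forces $f$ to be injective on $A$ (if $f(u)=f(v)$ then $l_1|u-v| \le 0$, so $u=v$), hence $f^{-1}\colon f(A) \to A$ is well defined, and rearranging $l_1|u-v| \le |f(u)-f(v)|$ gives $|f^{-1}(x)-f^{-1}(y)| \le l_1^{-1}|x-y|$ for all $x,y \in f(A)$; thus $f^{-1}$ is Lipschitz with constant $l_1^{-1}$, and the key step yields $\dim_H A = \dim_H f^{-1}(f(A)) \le \dim_H f(A)$. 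Combining the two inequalities gives $\dim_H f(A) = \dim_H A$. There is no genuine obstacle here; the only points deserving a moment's care are the bookkeeping with mesh sizes $\delta$ versus $c\delta$ when passing to the limit, and the observation that the bi-Lipschitz hypothesis is exactly what makes $f^{-1}$ exist and be Lipschitz on $f(A)$.
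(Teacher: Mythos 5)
Your proof is correct and complete; it is the standard Hausdorff-measure distortion argument (a Lipschitz map with constant $c$ gives $\mathcal{H}^s(g(E)) \le c^s \mathcal{H}^s(E)$, applied once to $f$ and once to the inverse $f^{-1}$, which the lower bi-Lipschitz bound makes well defined and Lipschitz with constant $l_1^{-1}$). The paper itself offers no proof of this lemma --- it is simply cited from Falconer's book --- and your argument is precisely the one found in that reference, so there is nothing in the paper to diverge from.
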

\begin{lemma} \label{Lem2}
	If $ f: A \rightarrow \mathbb C$ is a homeomorphism, then it maps the boundary of $ A $ onto the boundary of $ f(A).$
\end{lemma}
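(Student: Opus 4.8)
The plan is to derive this from Brouwer's Invariance of Domain: an injective continuous map from an open subset of $\mathbb{R}^{2}\cong\mathbb{C}$ into $\mathbb{C}$ is a homeomorphism onto its image, and in particular sends open sets to open sets. First I would reduce the assertion to the single identity
\[
f(\operatorname{int} A)=\operatorname{int} f(A),
\]
where $\operatorname{int}$ always denotes the interior taken in $\mathbb{C}$. Granting this, and using that in the situations we care about $A$ is a filled Julia set $\mathcal{K}_{c}$, hence compact, so that $A$ and $f(A)$ (a continuous image of a compact set) are both closed, one has $\partial A=A\setminus\operatorname{int} A$ and $\partial f(A)=f(A)\setminus\operatorname{int} f(A)$; since $f$ is a bijection of $A$ onto $f(A)$ and $\operatorname{int} A\subseteq A$,
\[
f(\partial A)=f(A)\setminus f(\operatorname{int} A)=f(A)\setminus\operatorname{int} f(A)=\partial f(A),
\]
which is the claim.

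For the identity, the inclusion $f(\operatorname{int} A)\subseteq\operatorname{int} f(A)$ follows by applying Invariance of Domain to the restriction $f|_{\operatorname{int} A}$: it is continuous and injective on the open set $\operatorname{int} A$, so $f(\operatorname{int} A)$ is open in $\mathbb{C}$, and being contained in $f(A)$ it lies inside $\operatorname{int} f(A)$. For the reverse inclusion, take $y\in\operatorname{int} f(A)$ and an open ball $B\subseteq f(A)$ with $y\in B$; then $f^{-1}|_{B}$ is continuous and injective, so $f^{-1}(B)$ is open in $\mathbb{C}$, and since $f^{-1}(B)\subseteq A$ it is contained in $\operatorname{int} A$, whence $y\in f\big(f^{-1}(B)\big)\subseteq f(\operatorname{int} A)$.

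The delicate point — and the reason a purely formal argument does not work — is that $f$ is only assumed to be a homeomorphism between the subspaces $A$ and $f(A)$, so a priori it is open only relative to those subspaces, not relative to $\mathbb{C}$; Invariance of Domain is exactly the tool that upgrades this, and it is genuinely indispensable, since for a badly behaved domain the statement is false (e.g.\ $A=\mathbb{Q}\cap[0,1]$ with $f$ the identity), so some mild regularity — here, $A$ closed — is tacitly needed. I would also remark that the argument degenerates gracefully: if $\operatorname{int}\mathcal{K}_{c}=\varnothing$ then the identity above forces $\operatorname{int} f(\mathcal{K}_{c})=\varnothing$ as well, and both sides of the lemma reduce to $f(\mathcal{K}_{c})$.
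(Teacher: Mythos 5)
The paper offers no proof of this lemma at all: it is stated as one of two ``simple propositions'', with only Lemma \ref{Lem1} carrying a citation, so there is nothing of the authors' to compare your argument with. Your proof via Brouwer's invariance of domain is correct and is essentially the standard way to establish the statement: the identity $f(\operatorname{int} A)=\operatorname{int} f(A)$ follows exactly as you say, applying invariance of domain once to $f|_{\operatorname{int} A}$ and once to $f^{-1}$ restricted to a ball inside $f(A)$, and the passage from this identity to $f(\partial A)=\partial f(A)$ is valid under your added hypothesis that $A$ is compact, so that $A$ and $f(A)$ are closed and $\partial A=A\setminus\operatorname{int} A$, $\partial f(A)=f(A)\setminus\operatorname{int} f(A)$; injectivity then gives $f(A\setminus\operatorname{int} A)=f(A)\setminus f(\operatorname{int} A)$. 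You are also right that the lemma as printed needs such a regularity hypothesis and that this is harmless in context, since it is only ever applied to $\mathcal{K}_c$ and $\mathcal{M}$, which are compact. One correction to your closing remarks: $A=\mathbb{Q}\cap[0,1]$ with $f$ the identity is not a counterexample --- there the statement merely fails to parse, because $\partial A\not\subseteq A$ and $f$ is undefined on the boundary --- and ``$A$ closed'' is not the right tacit hypothesis either, since one also needs $f(A)$ closed: the closed set $A=\mathbb{R}$ mapped homeomorphically onto the spiral $t\mapsto(1+e^{-t})e^{it}$ has $f(\partial A)=f(A)$ strictly smaller than $\partial f(A)$, which also contains the unit circle. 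What your argument actually uses, and what compactness of $A$ delivers, is closedness of both $A$ and $f(A)$.
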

	
It is clear that a bi-Lipschitz function is a homeomorphism.
	
Shishikura \textit{\cite{Shishikura}} proved that the Hausdorff dimension of the boundary of the Mandelbrot set is $2$. Moreover, he showed that the Hausdorff dimension of the Julia set corresponding to $ c \in \partial \mathcal{M} $ is also $2$.

It implies from the above discussions that if $f$ is a bi-Lipschitz function and $\mathcal{F}$ is either a Julia set or the boundary of the Mandelbrot set, then their images $\mathcal{F}_f $ are fractals. In what follows, we will mainly use functions, which are bi-Lipschitzian except possibly in neighborhoods of single points. 

Now, we apply FMI to a Julia set $\mathcal{J},$ and the iteration will be in the  form
\begin{equation} \label{MapJul}
	f^{-1}(z_{n+1})=\big[f^{-1}(z_n)\big]^2+c,	
\end{equation}
with various functions $f$ and values of $c.$  The resulting    fractals $\mathcal{J}_f = f( \mathcal{J})$ are depicted in Figs. \ref{FMI-MJ1} and \ref{FMI-MJ2}. They are mapped by $ f(z)=\cos^{-1}\big(\frac{1}{z}-1\big) $, $ c=-0.7589+0.0735i $, and  $f(z)=\big(\sin^{-1} z\big)^\frac{1}{5}$, $c=-0.175-0.655i$ from the Julia sets in Figs. \ref{FMI-J1} and \ref{FMI-J2}, respectively.

For mapping of the Manderbrot set, we propose the FMI 
\begin{equation} \label{MapMandel}
	z_{n+1}=z_n^2+f^{-1}(c).	
\end{equation}
Along the lines of the proof of Theorem \ref{Thm2}, one can show that if the map $f$ is bi-Lipschitzian, then the iteration (\ref{MapMandel}) defines the relation $ f(\mathcal{M})=\mathcal{M}_f $, where $ \mathcal{M}_f$ is a new fractal. Figure \ref{FMI-MM} shows  a fractal mapped by $ f(c)=\big(\frac{1}{c}-1\big)^\frac{1}{2} $ from the Mandelbrot set in Fig. \ref{FMI-M}.

\begin{figure}[H]
	\centering	
	{\setlength{\fboxsep}{0pt}%
	\setlength{\fboxrule}{1.5pt}%
	\fcolorbox{red}{white}{
		\begin{minipage}{0.465\textwidth}
			\subfigure[]{\hspace{-0.3cm}\includegraphics[width = 1.7in]{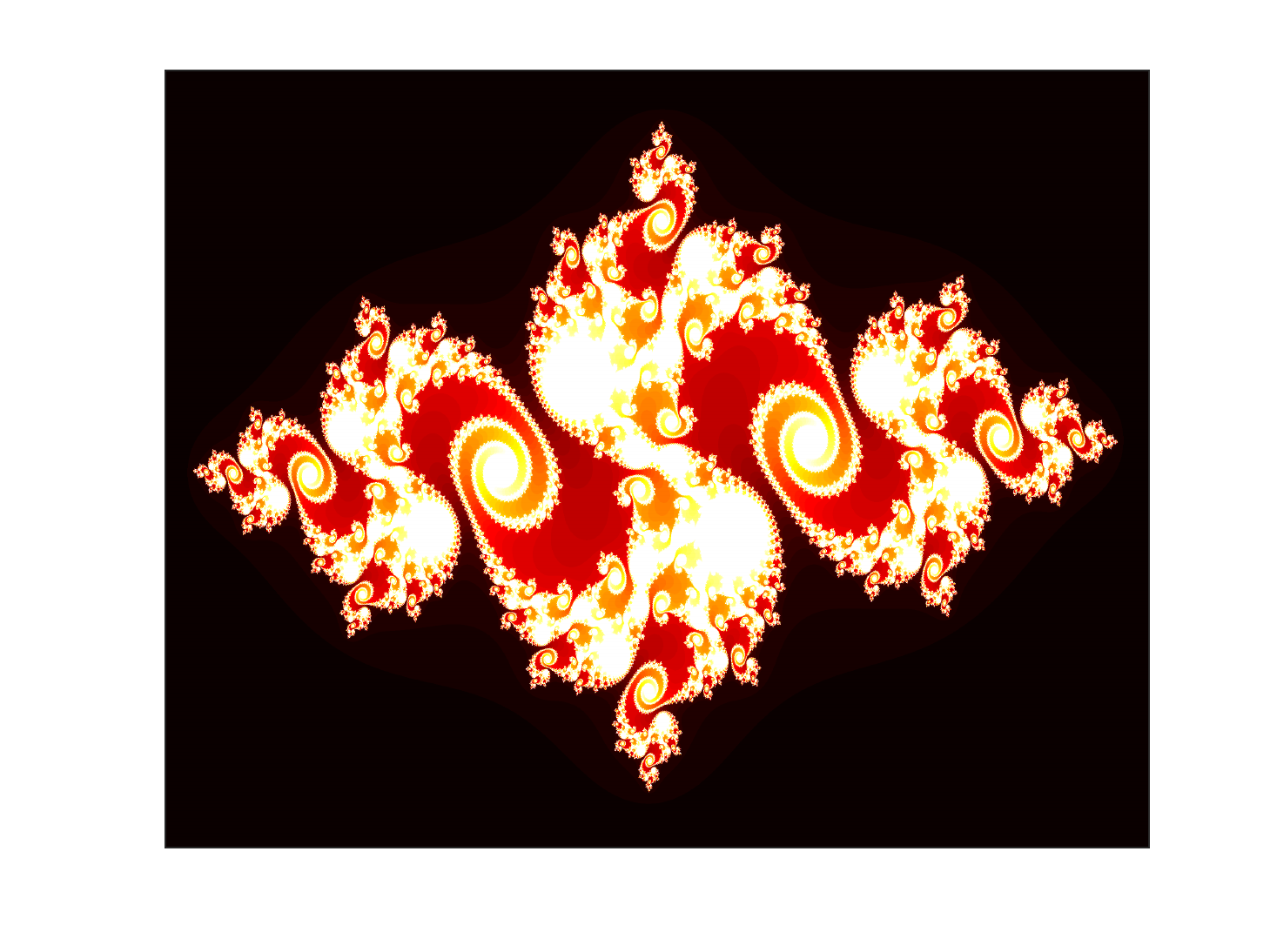}\label{FMI-J1}}
			\subfigure[]{\hspace{-0.3cm}\includegraphics[width = 1.7in]{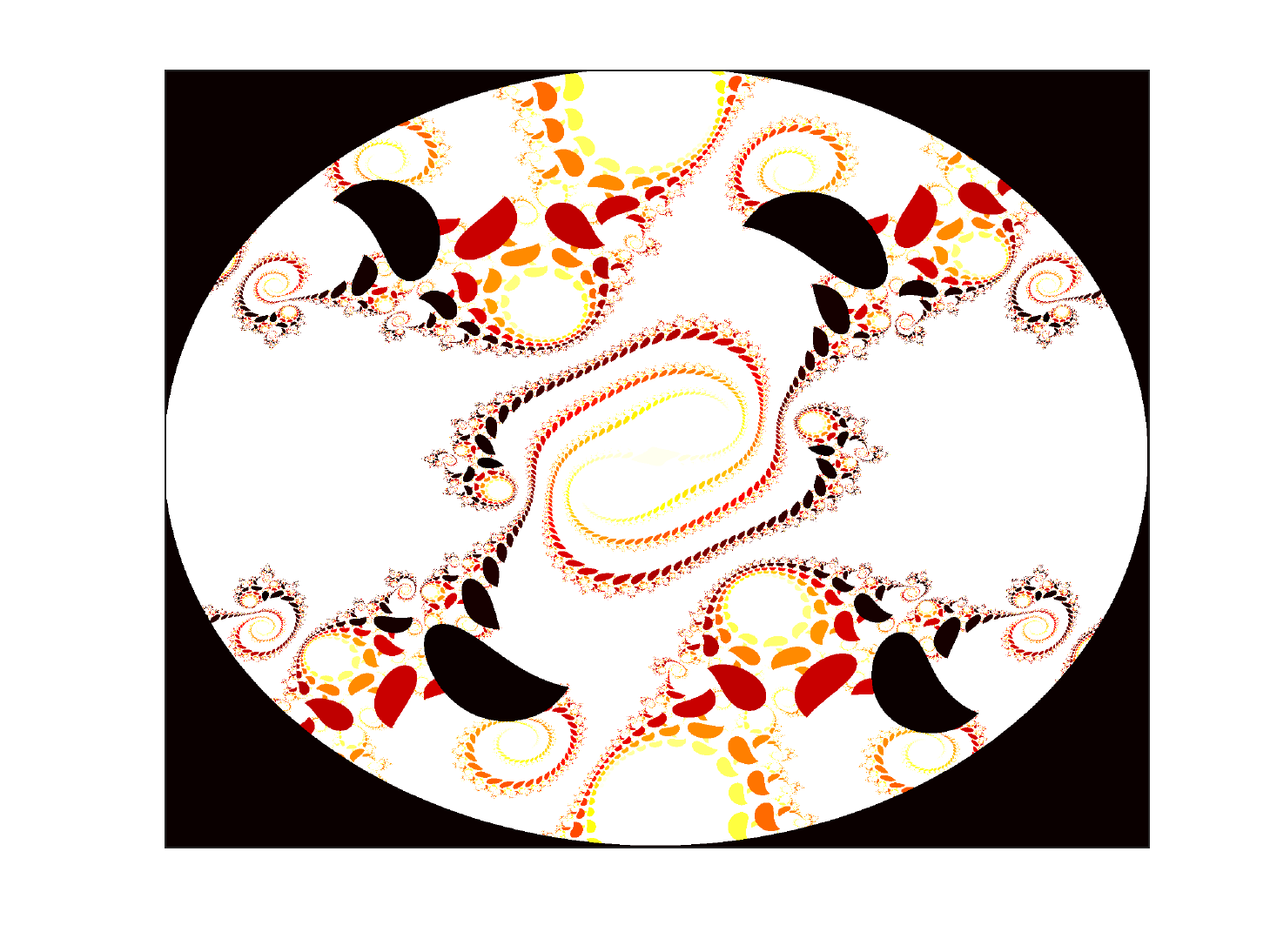}\label{FMI-MJ1}} \\
			\subfigure[]{\hspace{-0.3cm}\includegraphics[width = 1.7in]{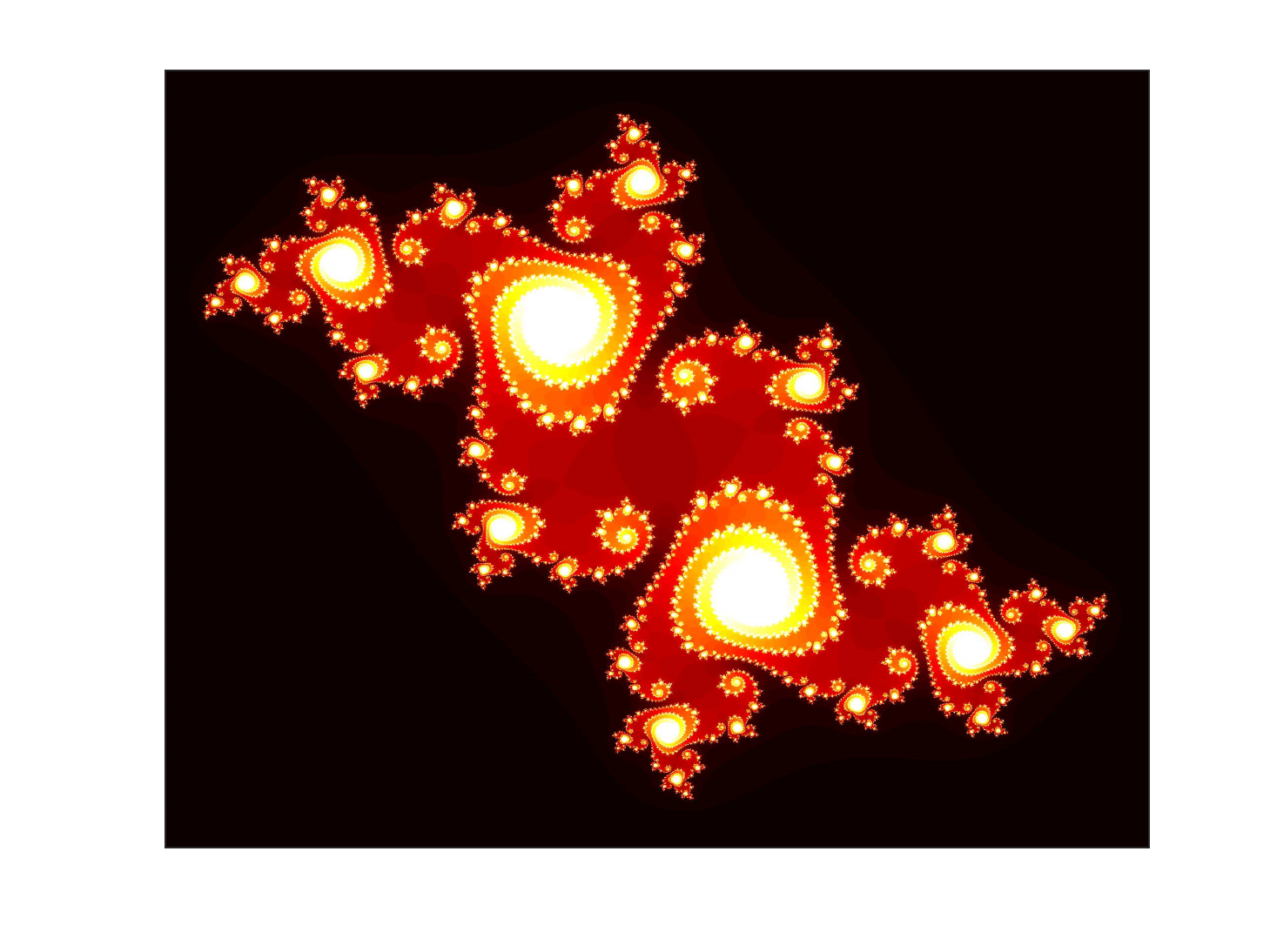}\label{FMI-J2}}
			\subfigure[]{\hspace{-0.3cm}\includegraphics[width = 1.7in]{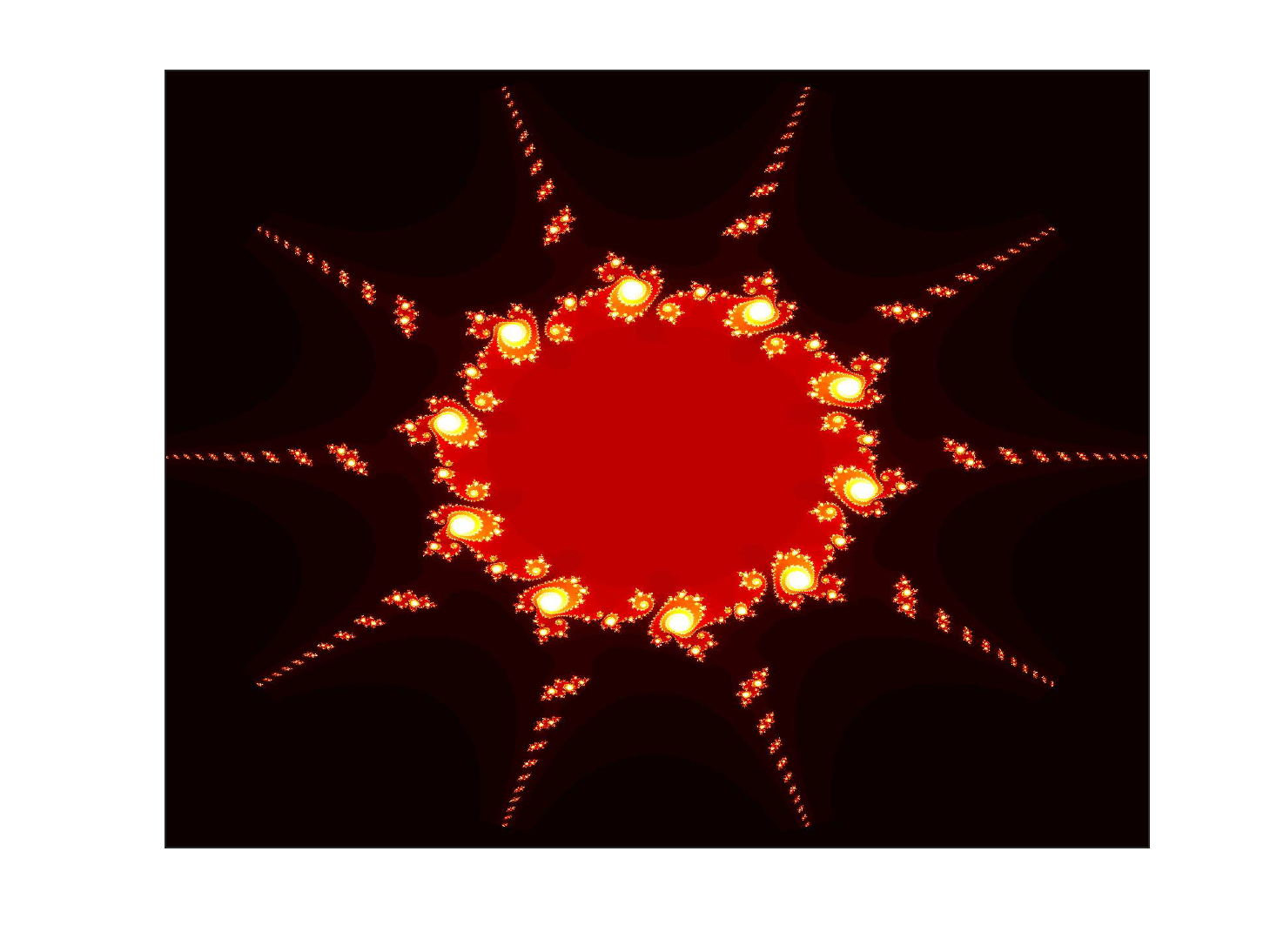}\label{FMI-MJ2}}\\
			\subfigure[]{\hspace{-0.3cm}\includegraphics[width = 1.7in]{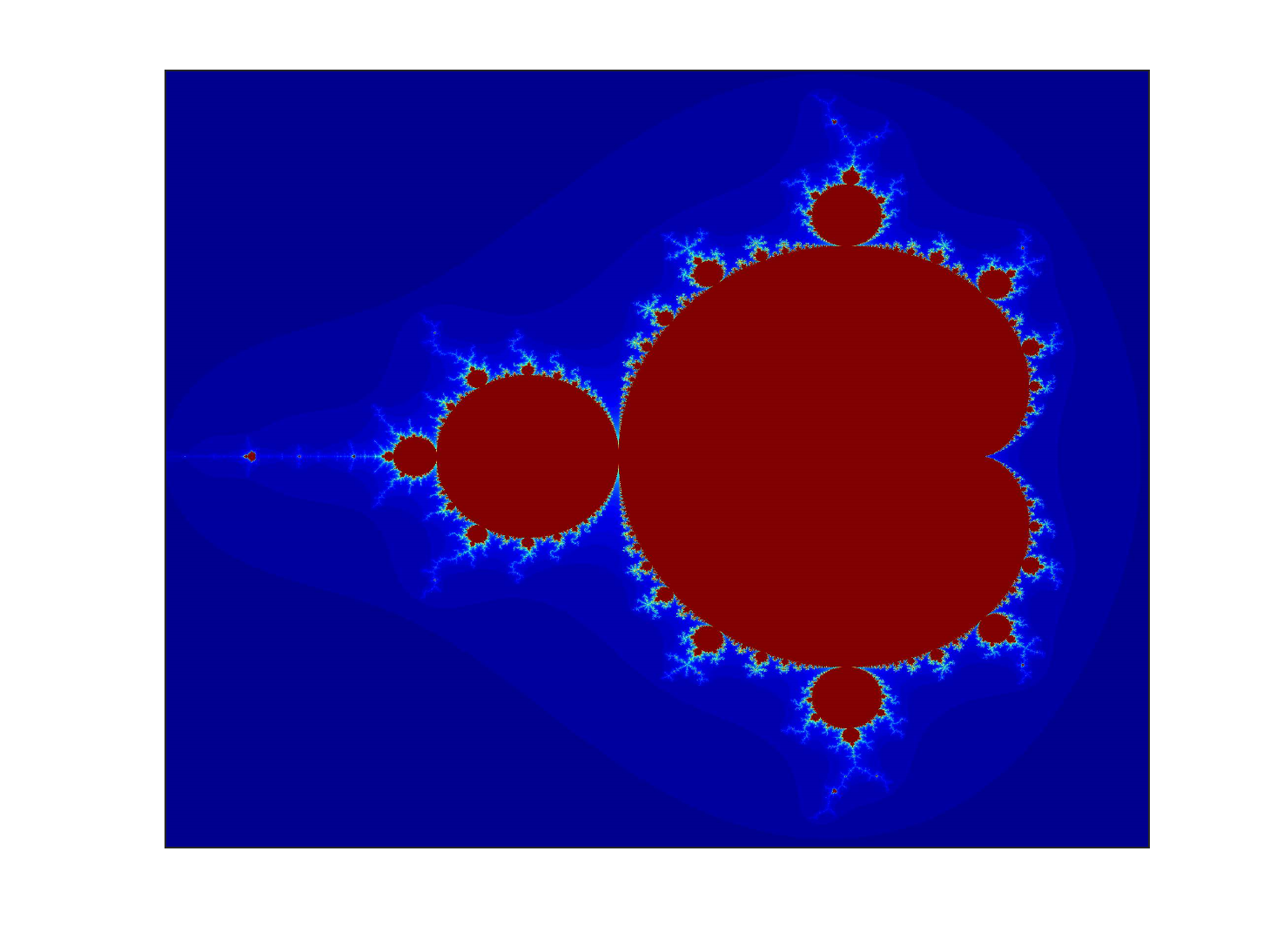}\label{FMI-M}}
			\subfigure[]{\hspace{-0.3cm}\includegraphics[width = 1.7in]{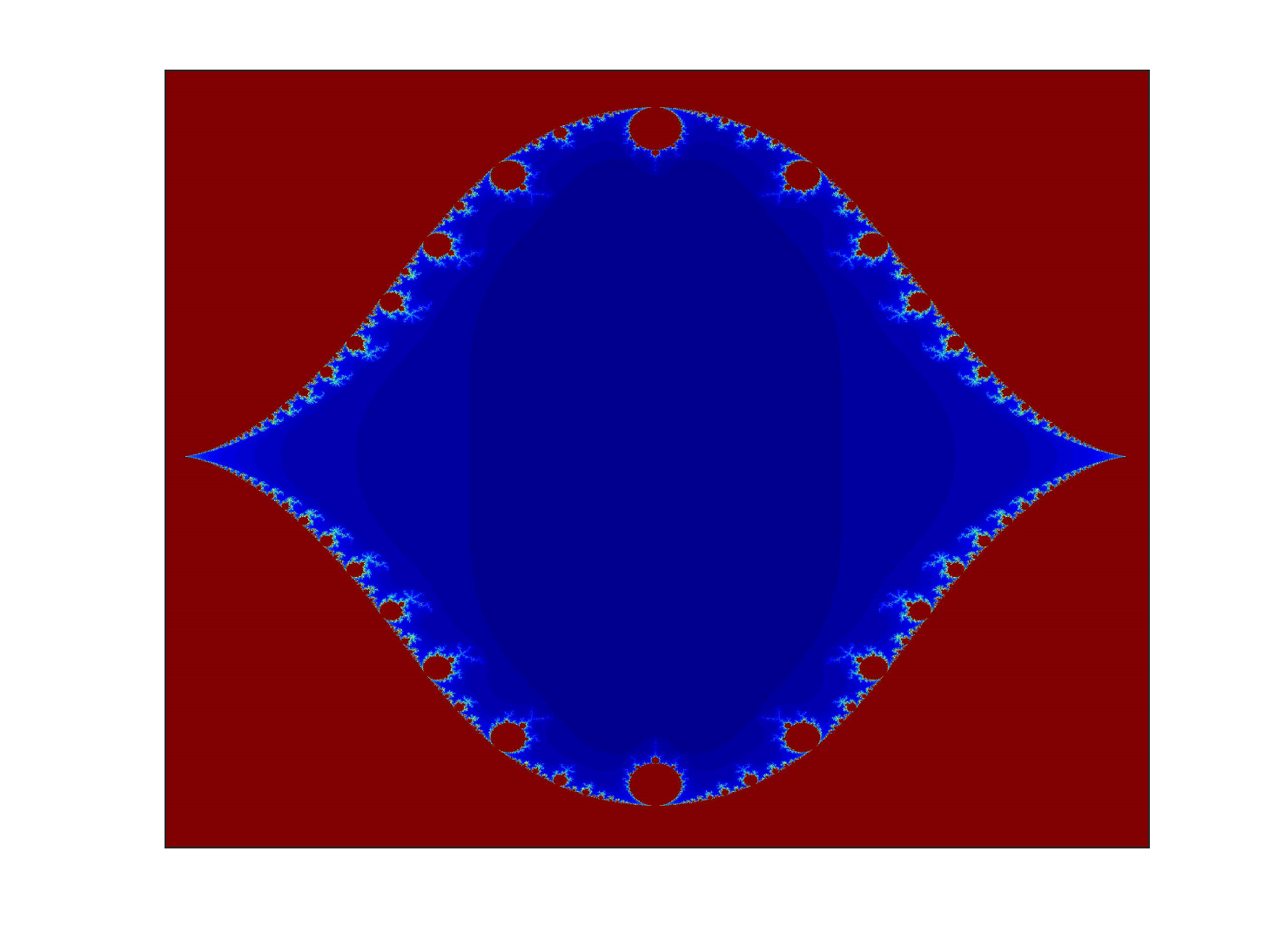}\label{FMI-MM}}
	\end{minipage}}}
	\caption{Julia and Mandelbrot sets with  their images}
	\label{FMI}   				
\end{figure}

\section*{\normalsize \color{red}  DISCRETE DYNAMICS}
	
Discrete fractal dynamics means simply iterations of mappings  introduced in the last section.
Let us consider a discerete dynamics with a bi-Lipschitz iteration function $f$ and a Julia set  $ \mathcal{J}_{0} =   \mathcal{J}$  as an initial fractal for the dynamics. The trajectory
\[ \mathcal{J}_0 ,\mathcal{J}_1,\mathcal{J}_2,\mathcal{J}_3,\ldots, \]
is obtained by the FMI  
\begin{equation} \label{DescJul2}
	z_{n+1}=f^{k}\Big([f^{-k}(z_n)]^2+c\Big)	
\end{equation}
 such that $\mathcal{J}_{k+1} = f( \mathcal{J}_k)$, $k = 0,1,2,3, \ldots$. The last equation is a fractal propagation algorithm.
	 	 
 Figure \ref{DiscDyn} shows the trajectory and its points at $ k=1 $ and $ k=5 $ for the function $ f(z)=z^2+a c+b $ with $ a=0.6 $, $ b=0.02-0.02i $ and $ c=-0.175-0.655i $.
\begin{figure}[H]
	\centering
	{\setlength{\fboxsep}{0pt}%
	\setlength{\fboxrule}{1.5pt}%
	\fcolorbox{red}{white}{
	\begin{minipage}{0.46\textwidth}
	\subfigure[Discrete fractal trajectory]{\includegraphics[width = 3.2in]{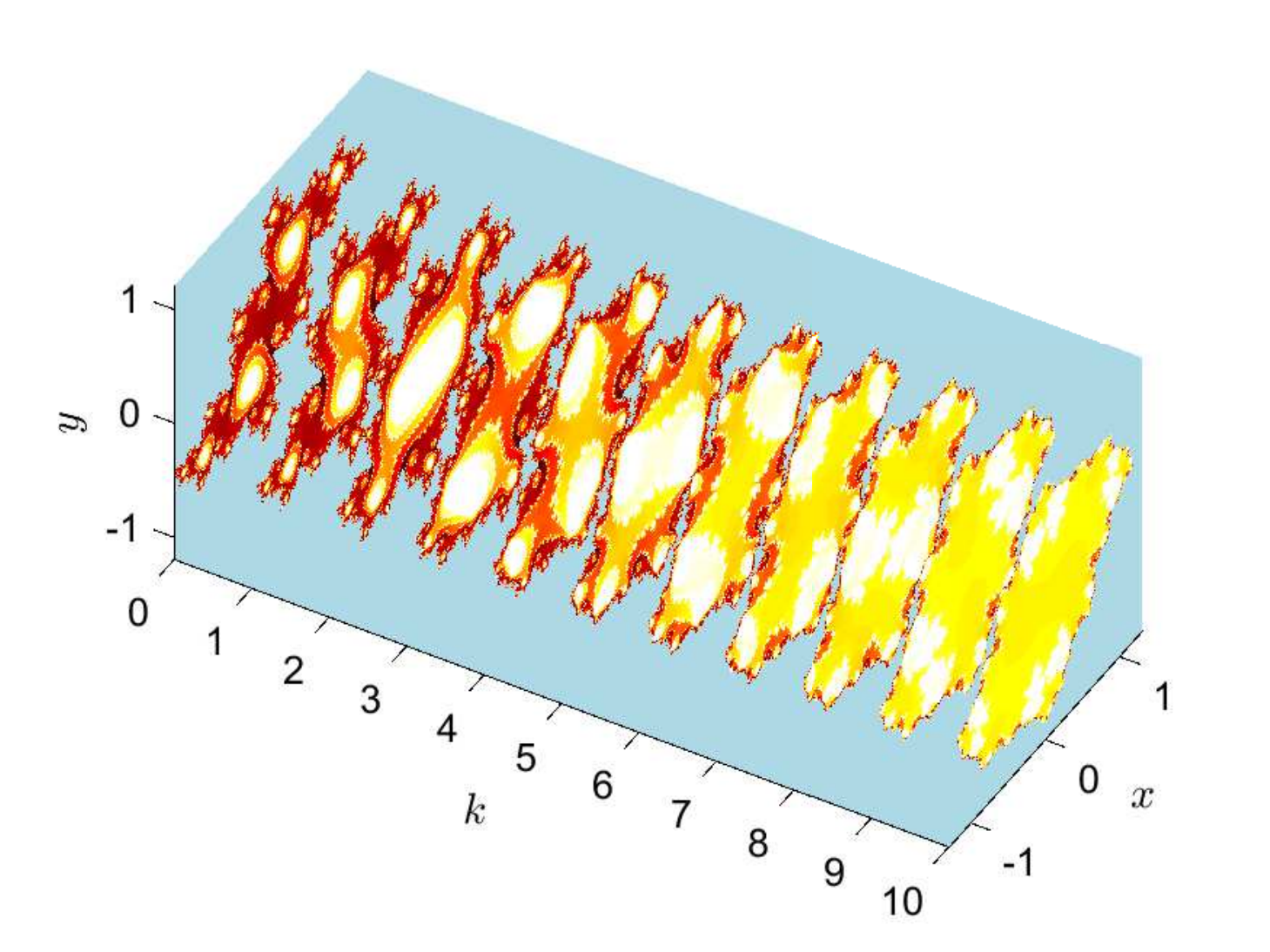}\label{DescDyT}}\\
	\subfigure[$ k=1 $]{\includegraphics[width = 1.6in]{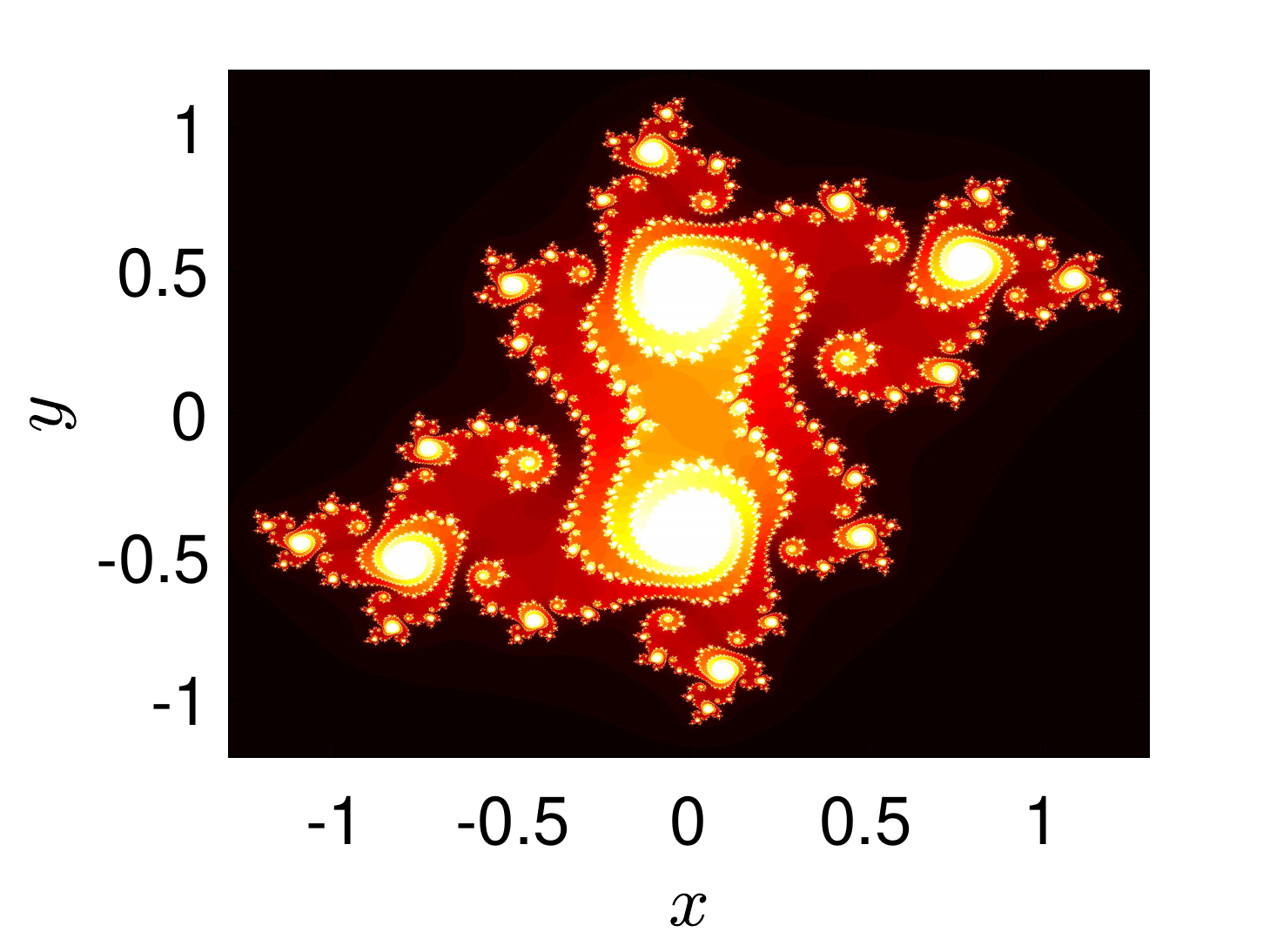}\label{DescDySa}}
	\hspace{-0.2cm}
	\subfigure[$ k=5 $]{\includegraphics[width = 1.6in]{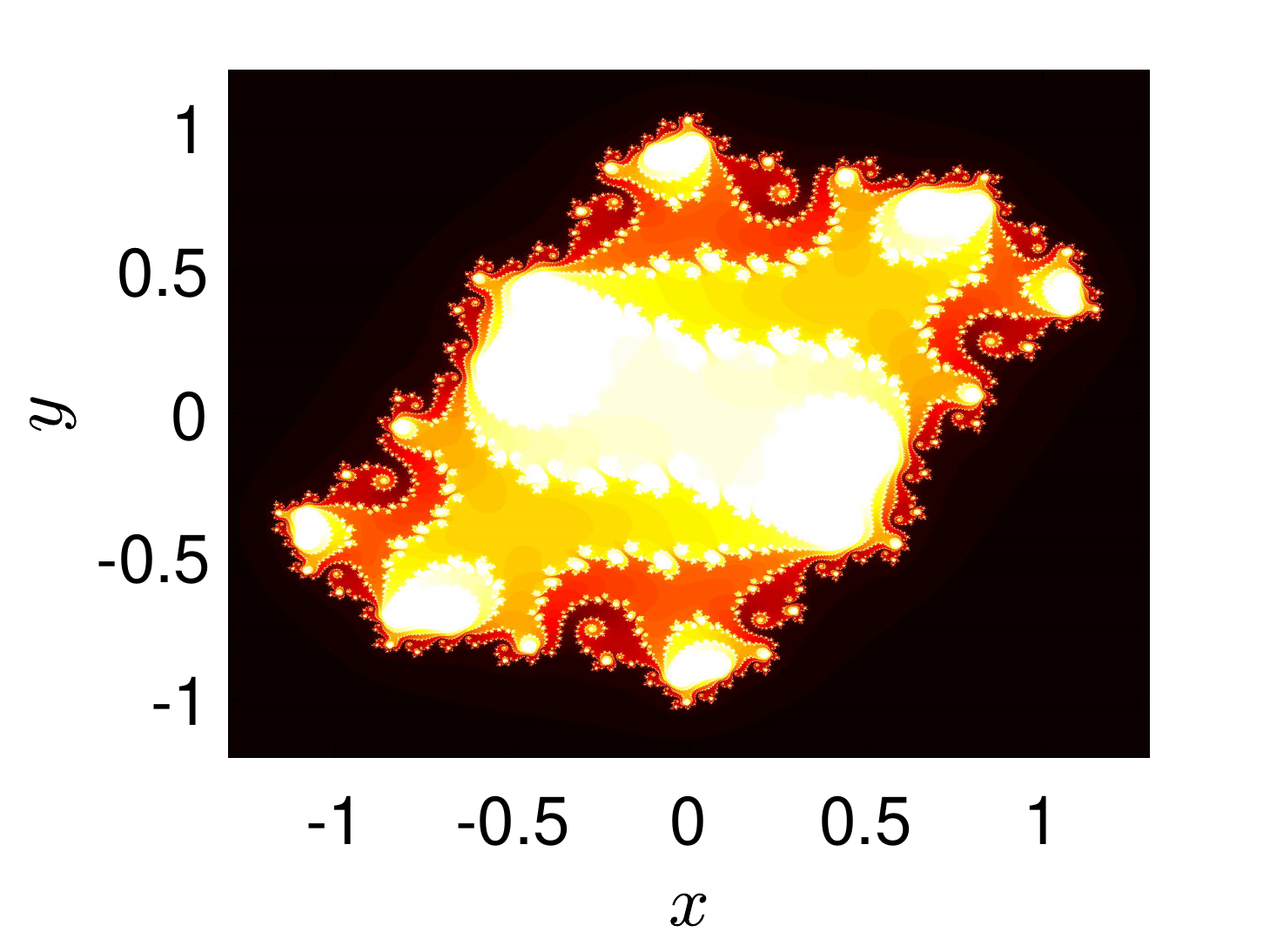}\label{DescDySb}}
	\end{minipage}}}
	\caption{The discrete trajectory}
	\label{DiscDyn}	
\end{figure}
	
\section*{\normalsize\color{red}  CONTINUOUS DYNAMICS}
	
To demonstrate a continuous dynamics $A_t$ with real parameter $t$ and fractals, we use the differential equation	
\begin{equation} \label{Diff}
	\frac{dz}{dt}=g(z), 	
\end{equation}	
such that $ A_t z = \phi(t,z),$ where $\phi(t,z)$ denotes the solution of (\ref{Diff}) with $\phi(0,z) = z.$ Thus, we will construct dynamics of sets  $A_t \mathcal{F},$ where a fractal $\mathcal{F}$ is the initial value. To be in the course of the previous sections, we define a map $f(z) =  A_t z$ and the equation  
\begin{equation*} \label{ContJul1}
	A_{-t}(z_{n+1})=[A_{-t}(z_n)]^2+c.	
\end{equation*}
Thus the  FMI (\ref{Map Itd}) in this case will be in the form
\begin{equation} \label{ContJul2}
	z_{n+1}=A_{t}\Big([A_{-t}(z_n)]^2+c\Big).	
\end{equation}	
In what follows, we assume that the map $ A_t $ is bi-Lipschitzian. This is true, for instance, if the function $ g $ in (\ref{Diff}) is Lipschitzian. Then the  set $ A_t \mathcal{F} $ for each fixed $ t $ is a fractal determined by the FMI,  and we can say about continuous fractal dynamics.
	 
As an example we consider the differential equation $ dz/dt=-z, \; 0 \leq t \leq 1 $, with the flow $ A_t z=z e^{-t} $. It represents a contraction mapping  when it is applied to the iteration (\ref{ContJul2}), whereas the unstable dynamical system $ A_t z=z e^{t} $ corresponding to the differential equation $ dz/dt=z $ represents an expansion mapping.
	
\end{multicols}	
\begin{figure}[H]
	{\setlength{\fboxsep}{0pt}%
	\setlength{\fboxrule}{1.5pt}%
	\fcolorbox{red}{white}{
	\begin{minipage}{0.32\textwidth}
		\hspace{-1.0cm}
		\subfigure[$ \mathcal{J} e^{-t} $]{\includegraphics[width = 3.1in]{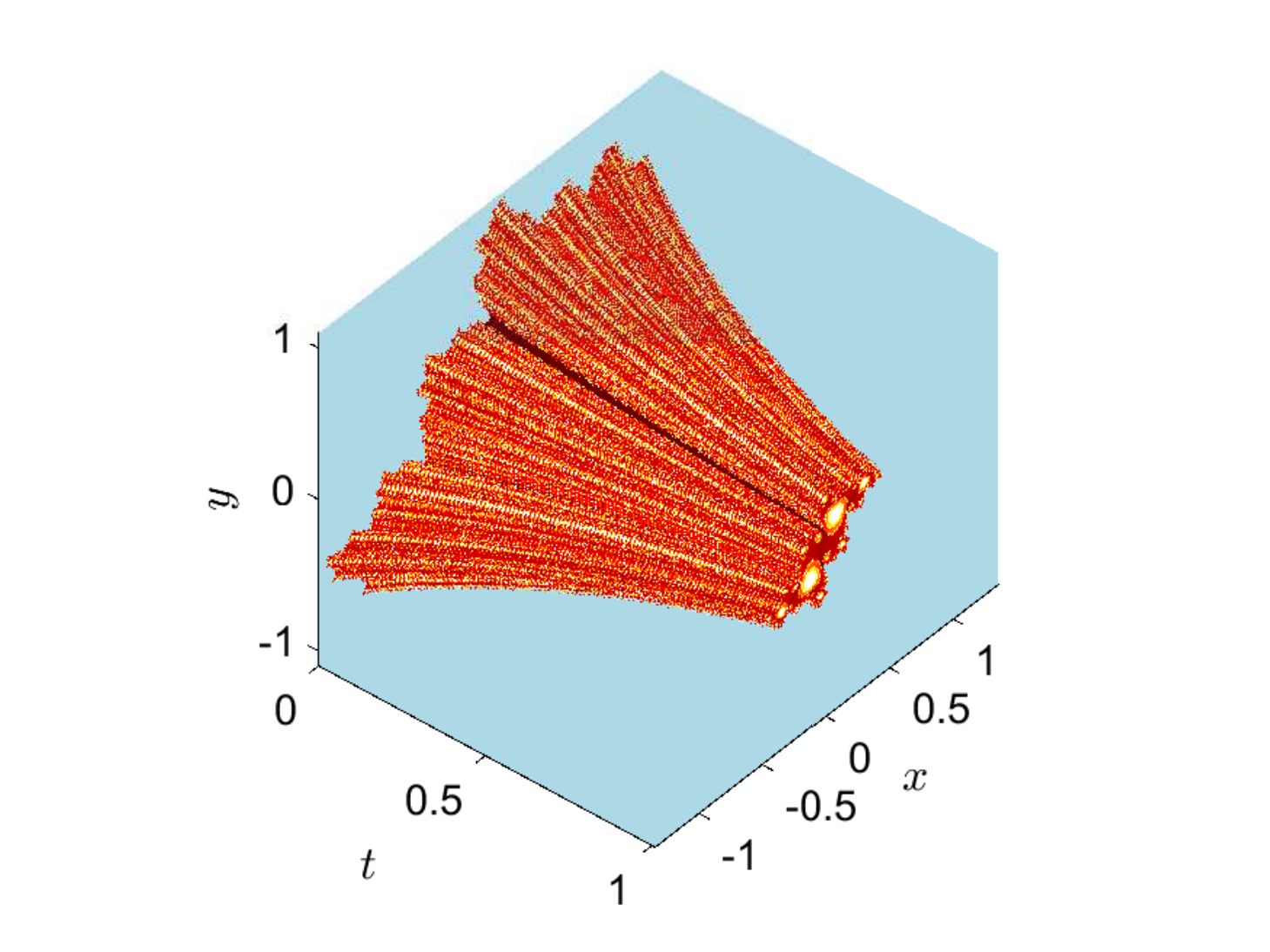}\label{ContDynSimpleCotra}}
	\end{minipage}
	\begin{minipage}{0.32\textwidth}
		\hspace{-0.6cm}
		\subfigure[$ \mathcal{J} e^{t} $]{\includegraphics[width = 3.1in]{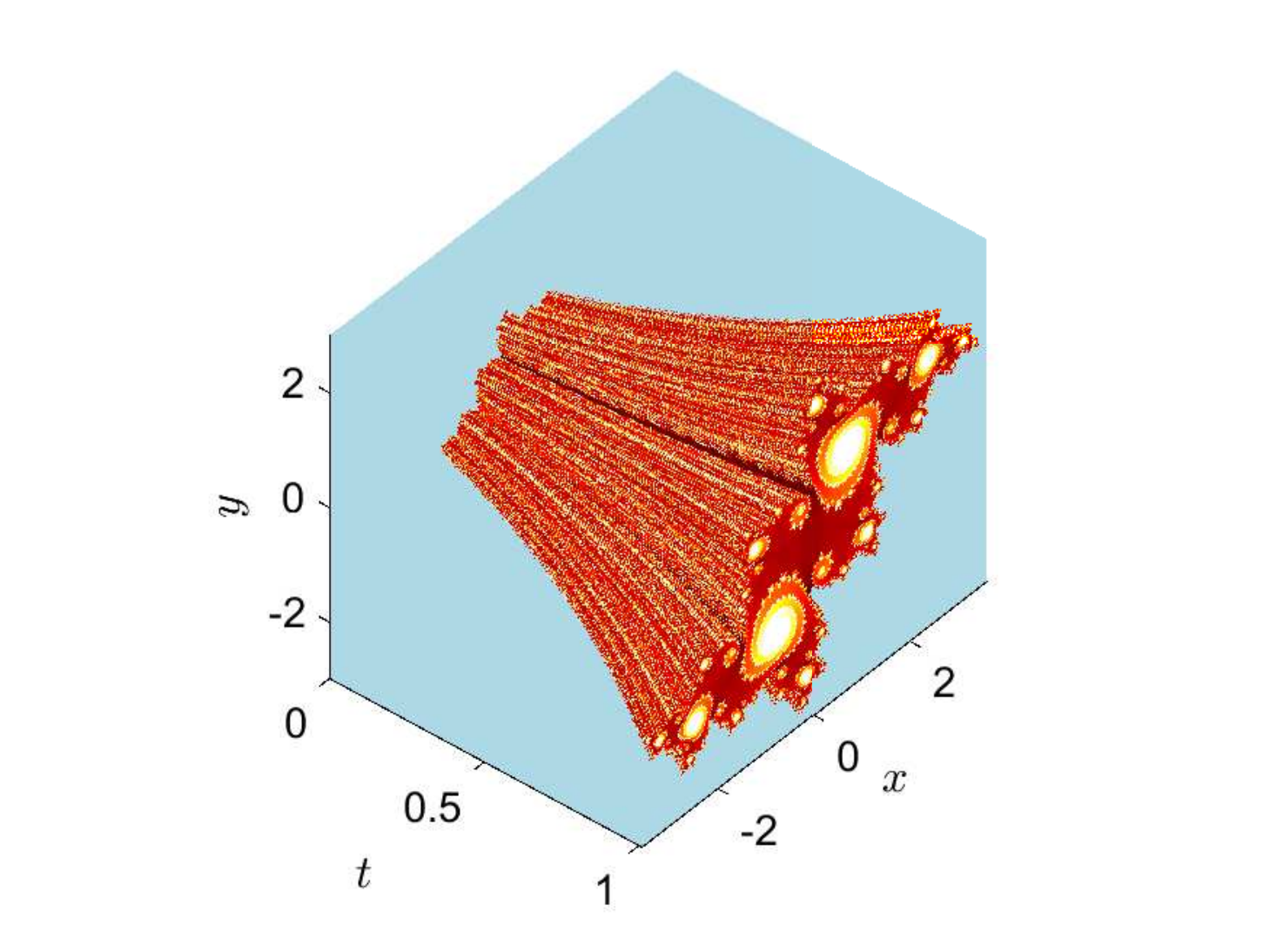}\label{ContDynSimpleExpan}}
	\end{minipage}
	\begin{minipage}{0.32\textwidth}
		\centering
		\subfigure[$ t=0 $]{\includegraphics[width = 1.6in]{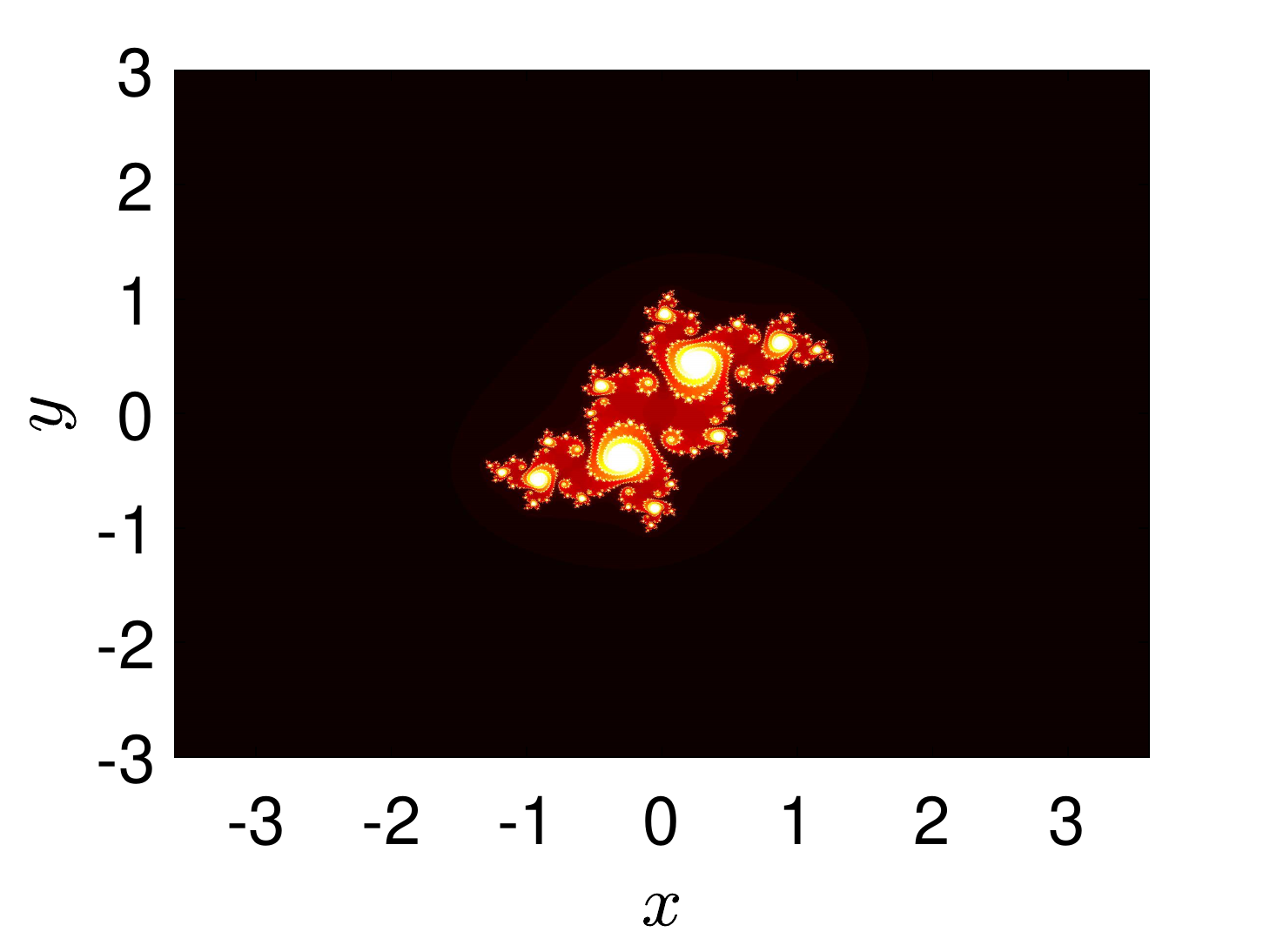}\label{ContDynSimpleExpanS1}} \\ 
		\subfigure[$ t=1 $]{\includegraphics[width = 1.6in]{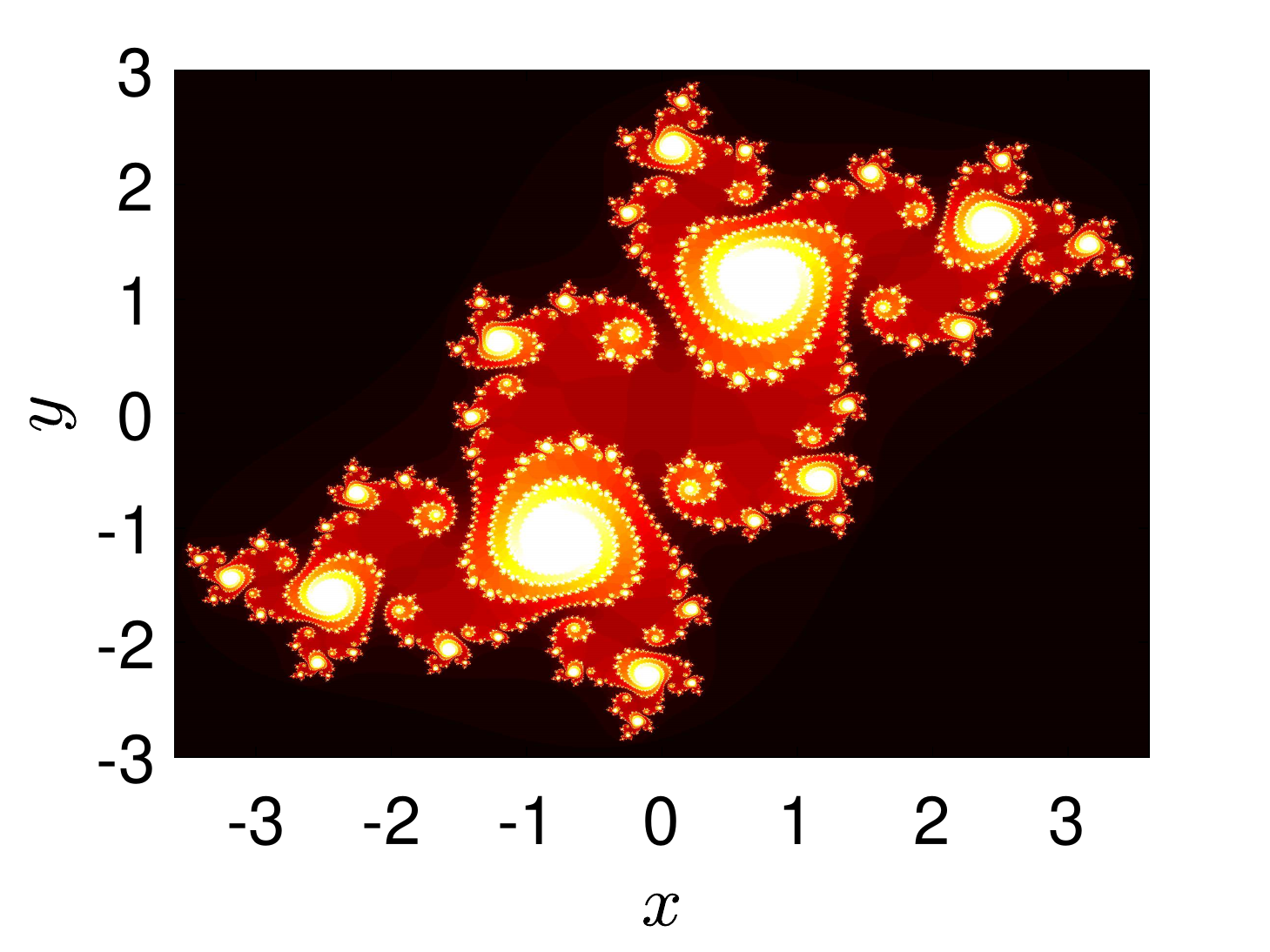}\label{ContDynSimpleExpanS2}}
	\end{minipage}}}
\caption{Fractals of the continuous dynamics}
\label{ContDynSimple}   				
\end{figure}
\begin{multicols}{2}	
	
Figure \ref{ContDynSimple} (a) and (b) contain fractal trajectories of the dynamics with the initial Julia set $\mathcal{J},$ corresponding to $ c=-0.175-0.655i .$ The initial fractal and the point of the expansion at $t=1$ are seen in parts (c) and (d) of the figure, respectively.
	
Now, we will focus on the autonomous system of differential equations 
\begin{equation}
\begin{split}
		&\frac{dx}{dt}=-y+x(4-x^2-y^2), \\
		&\frac{dy}{dt}=x+y(4-x^2-y^2).
\end{split}	
\label{AutoODES}
\end{equation}
	
The solution of the last system in polar coordinates with initial conditions $\rho(0)=\rho_0$ and $\varphi(0)=\varphi_0$ is given by
\[\begin{split}
	&\rho(t)=2e^{4t} \Big( \frac{4}{\rho_0^2}+e^{8t}-1\Big)^{-\frac{1}{2}}, \\
	&\varphi(t)=t+\varphi_0.
\end{split}\]
Thus, the map can be constructed by
\begin{equation} \label{Dyn3}
	A_t z=x(t)+iy(t),
\end{equation}
where
\[\begin{split}
&x(t)=\rho(t) \cos(\varphi(t)), \\
&y(t)=\rho(t) \sin(\varphi(t)), \\
&z= \rho_0 \cos(\varphi_0) + i \rho_0 \sin(\varphi_0). 
\end{split}\]	
	
In Fig. \ref{ContDynAutonT}, the fractal trajectory of system (\ref{AutoODES}) is seen  with the Julia set as the initial fractal. Parts (b)-(g) of the same figure represent various points of the trajectory.
	 
\end{multicols}	
\begin{figure}[H]
	{\setlength{\fboxsep}{0pt}%
	\setlength{\fboxrule}{1.5pt}%
	\fcolorbox{red}{white}{
	\centering
	\begin{minipage}{0.30\textwidth}
			\hspace{-1.0cm}
			\subfigure[$ A_t \mathcal{J} $]{\includegraphics[width = 3.2in]{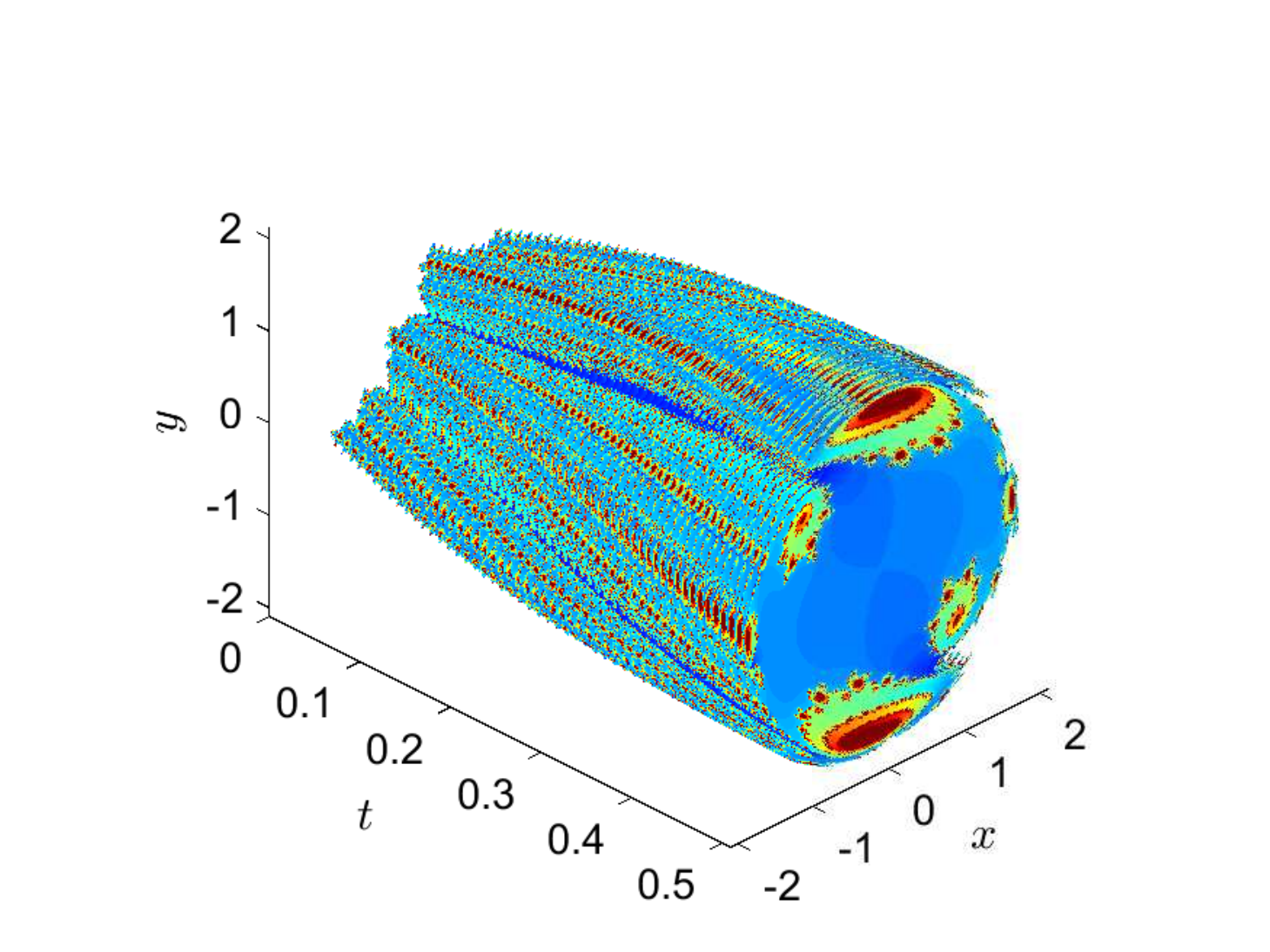}\label{ContDynAutonT}}
	\end{minipage} \hspace{0.8cm}
	\begin{minipage}{0.21\textwidth}
		\centering
		\subfigure[$ t=0 $]{\includegraphics[width = 1.5in]{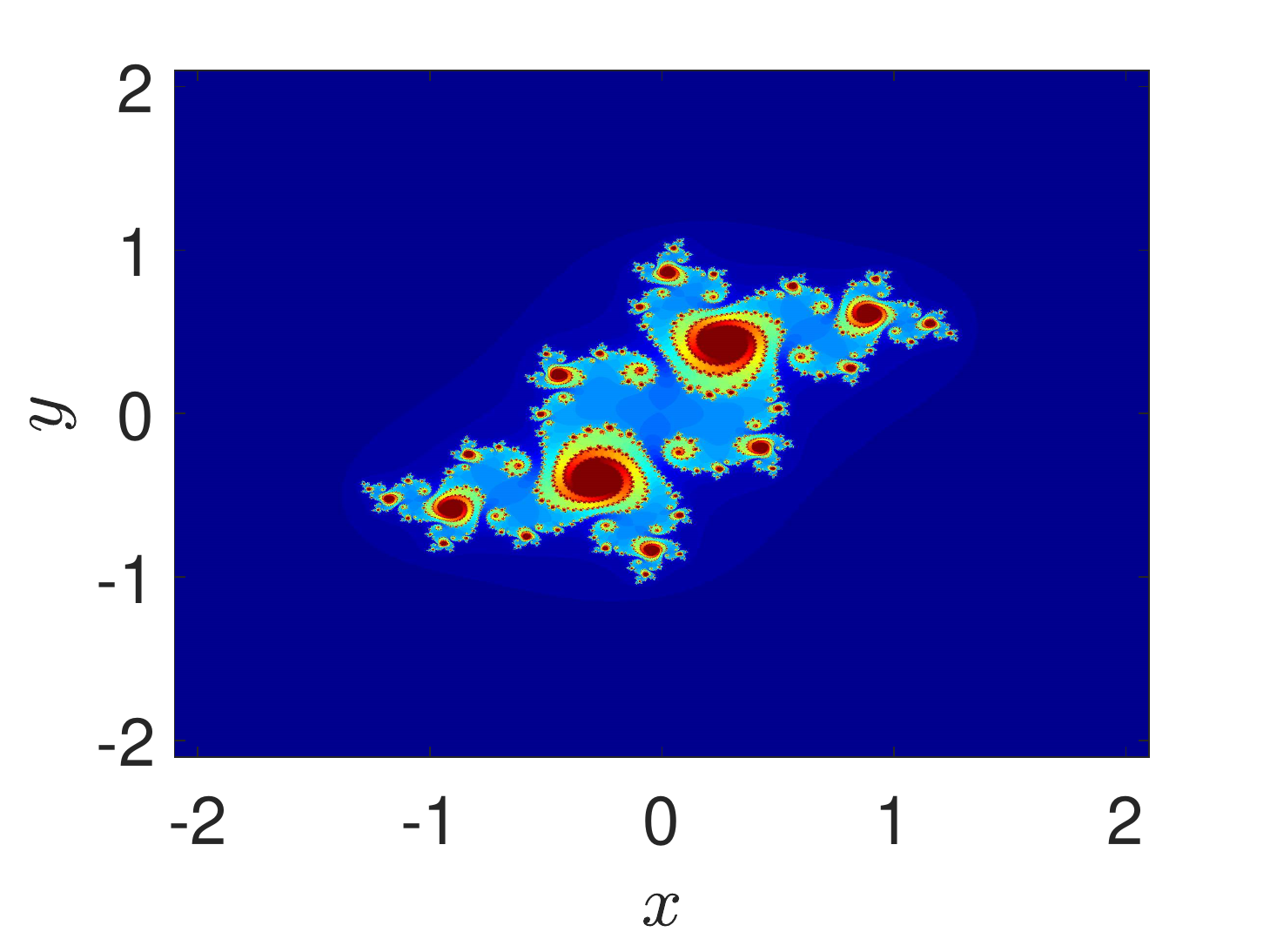}\label{ContDynAutonS0}} \\
		\subfigure[$ t=0.1 $]{\includegraphics[width = 1.5in]{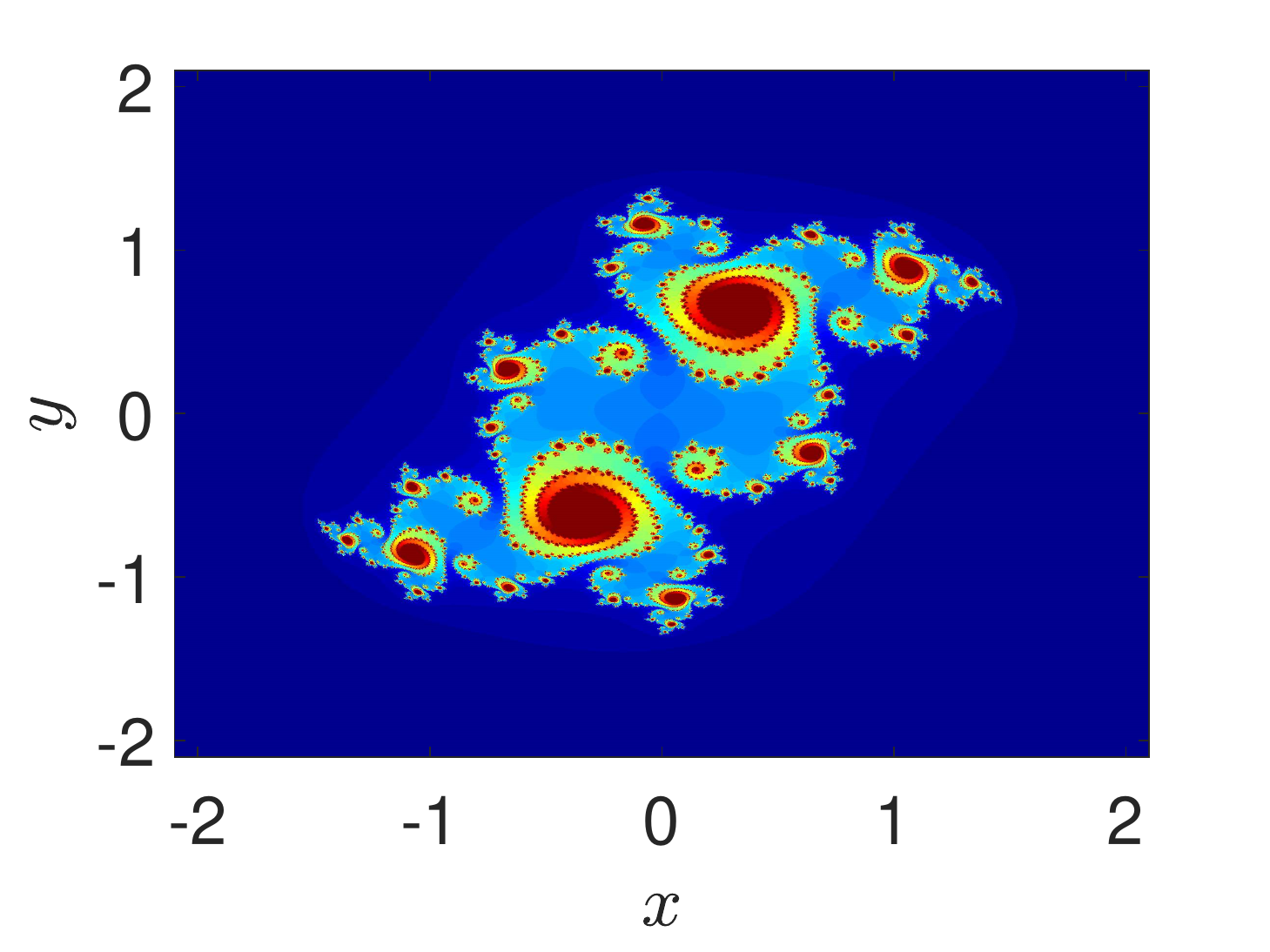}\label{ContDynAutonS1}}
	\end{minipage} \hspace{-0.4cm}
	\begin{minipage}{0.21\textwidth}
		\centering
		\subfigure[$ t=0.2 $]{\includegraphics[width = 1.5in]{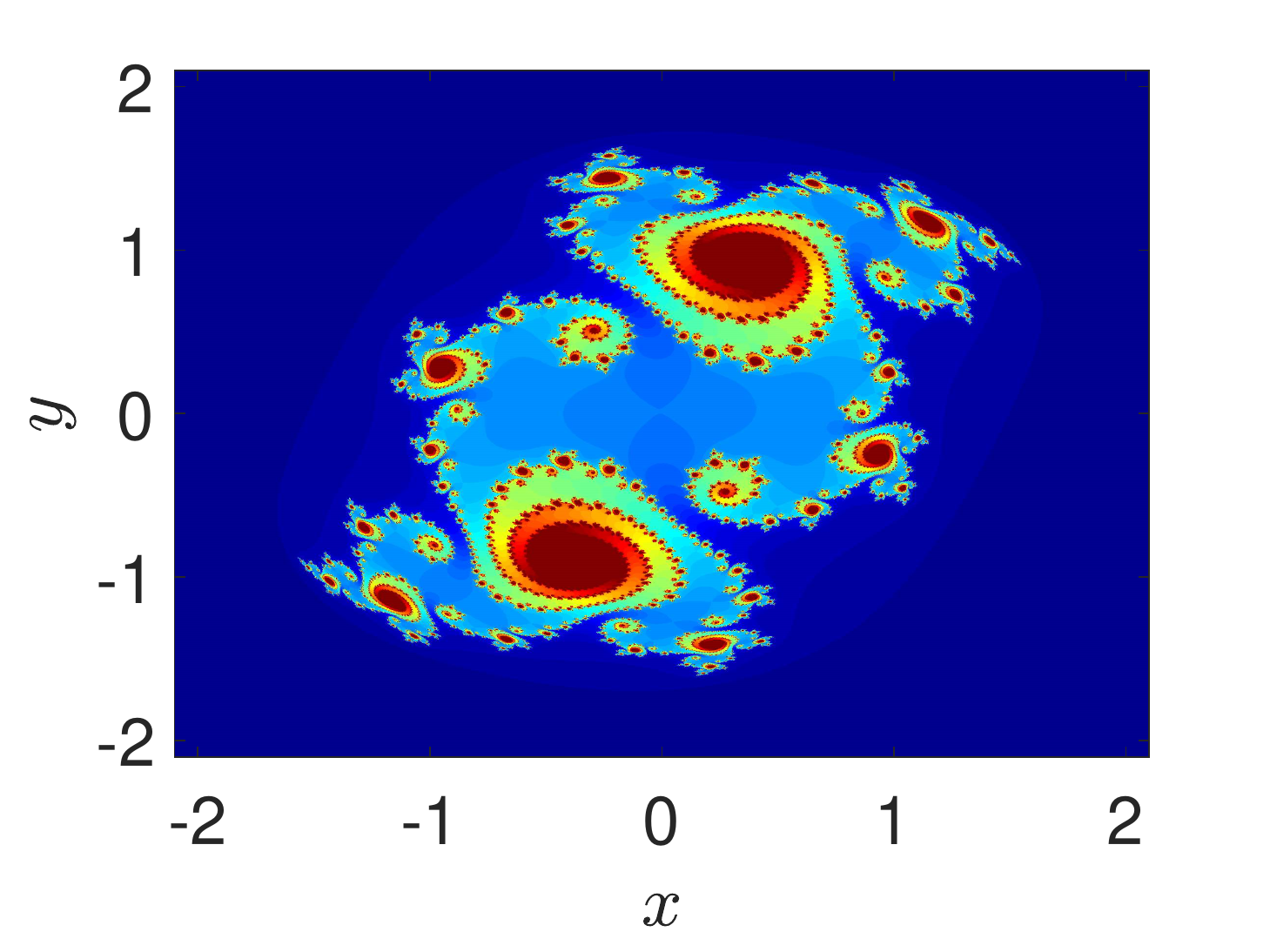}\label{ContDynAutonS2}} \\
		\subfigure[$ t=0.3 $]{\includegraphics[width = 1.5in]{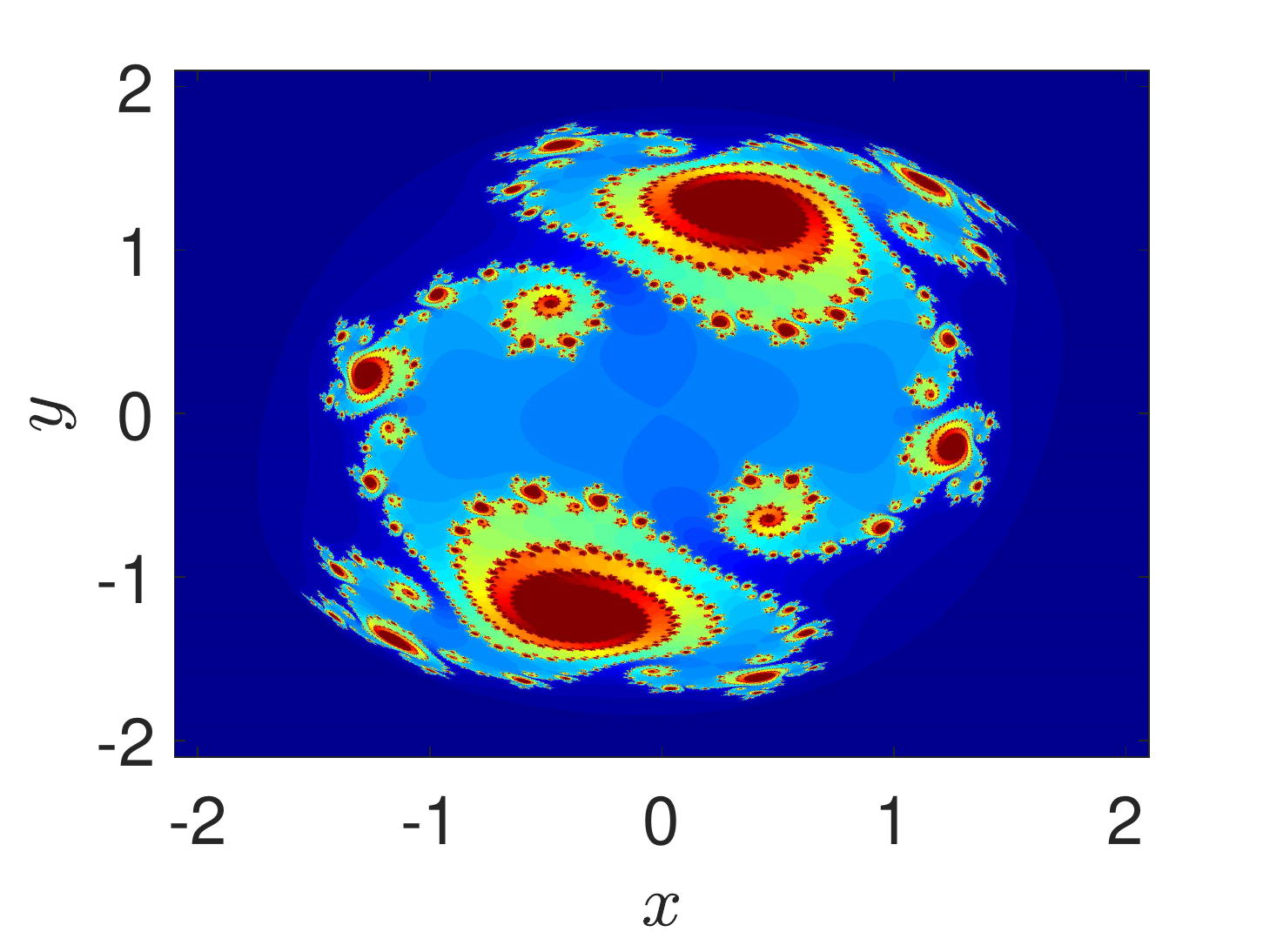}\label{ContDynAutonS3}}
	\end{minipage} \hspace{-0.4cm}
	\begin{minipage}{0.21\textwidth}
		\centering
		\subfigure[$ t=0.4 $]{\includegraphics[width = 1.5in]{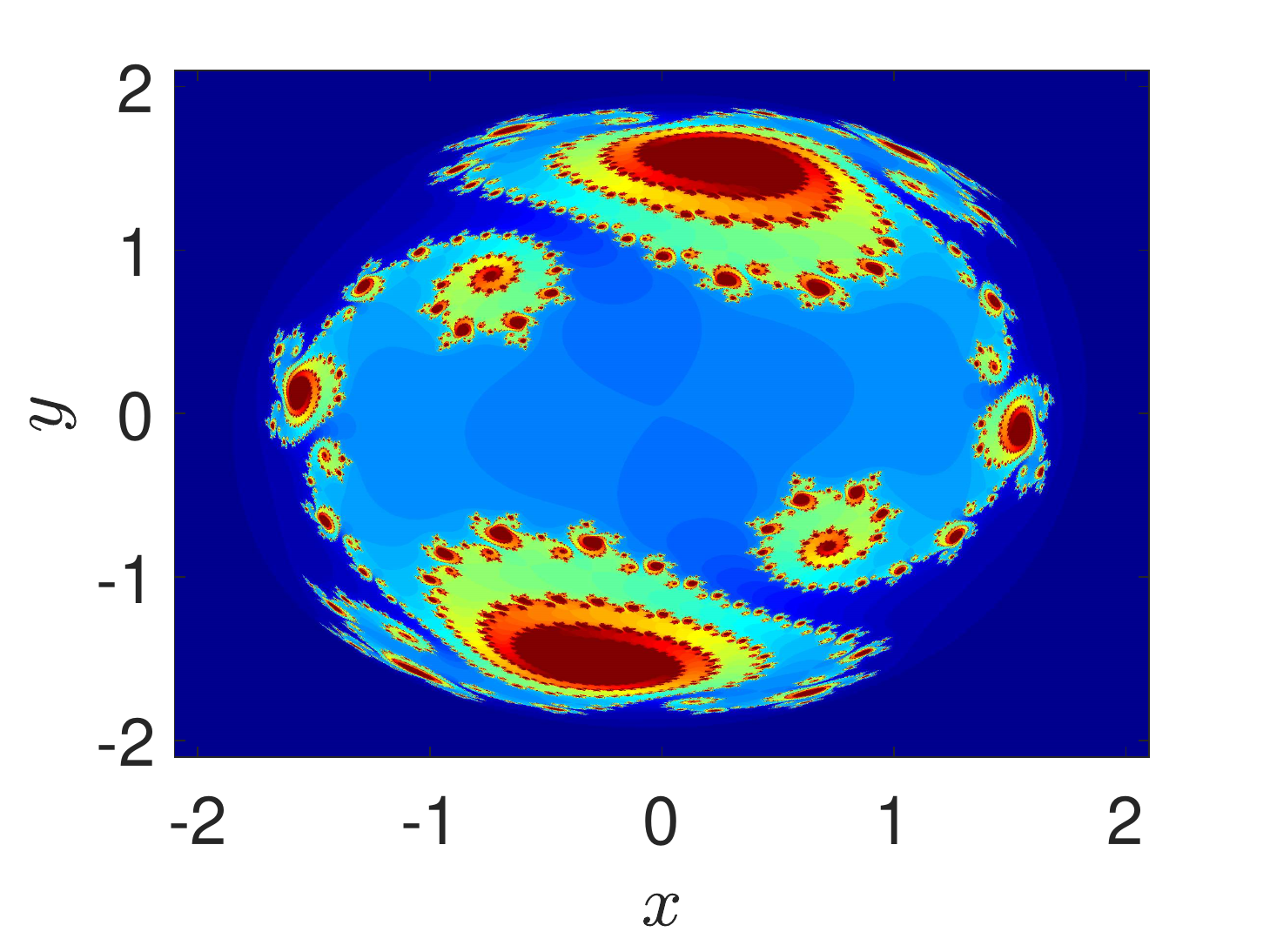}\label{ContDynAutonS4}} \\
		\subfigure[$ t=0.5 $]{\includegraphics[width = 1.5in]{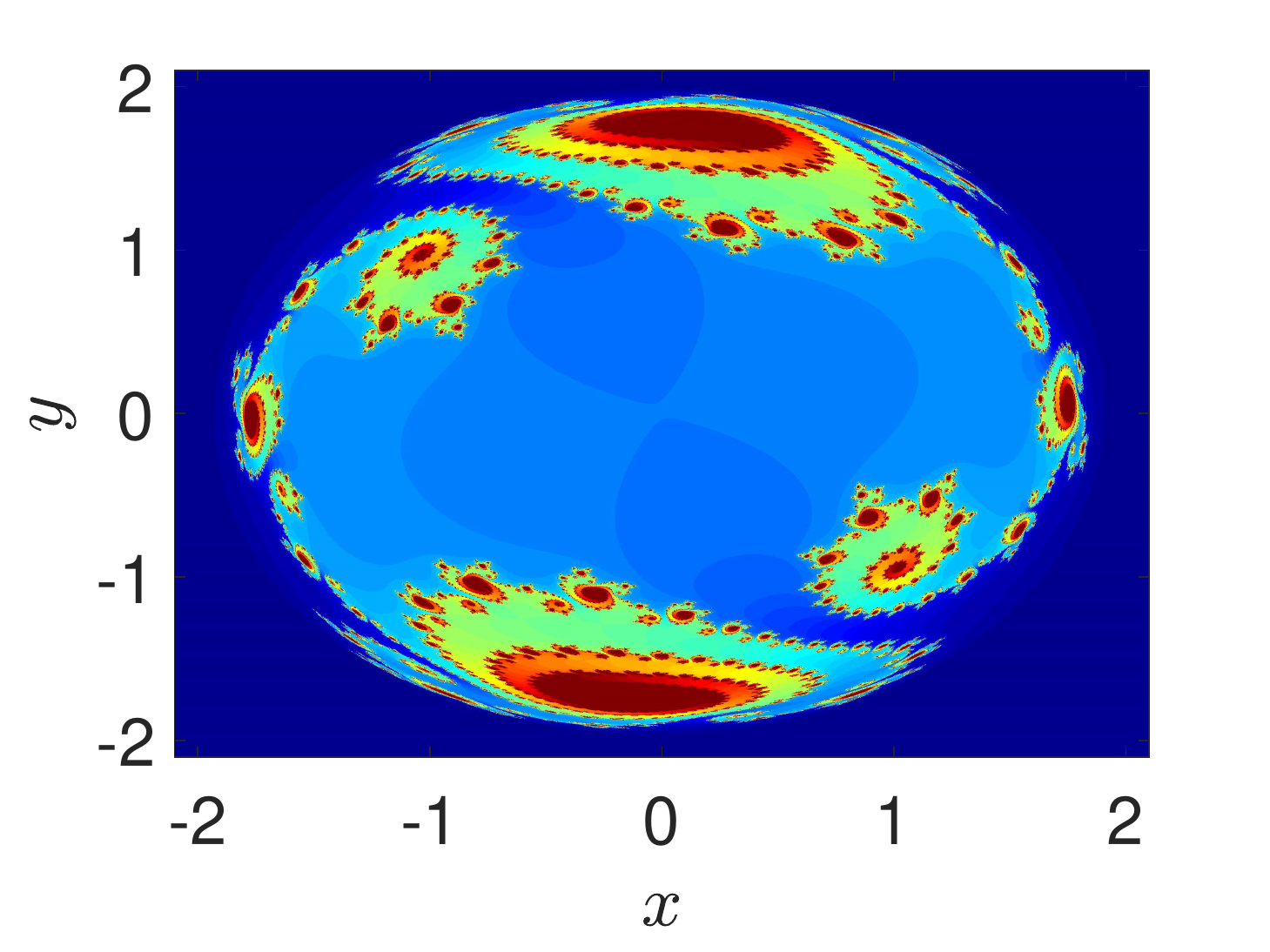}\label{ContDynAutonS5}}
	\end{minipage}}}
\caption{The fractal trajectory for (\ref{AutoODES}) and its points}
\label{ContDynAuton}   				
\end{figure}
\begin{multicols}{2}	
	 
Next, let us consider the  non-autonomous differential equation
\begin{equation} \label{ODE2}
	\frac{dz}{dt}=a z+ (\cos t+ i \sin t),
\end{equation}
and the map 
\begin{equation} \label{Dyn2}
A_t z=(z+\frac{a+i}{1+a^2}) e^{a t}-\frac{a+i}{1+a^2}(\cos t+ i \sin t),
\end{equation}
which is determined by the solutions of Eq. \ref{ODE2}. 

The map is not of a dynamical system since there is no group property for non-autonomous equations, in general. This is why, Eq. \ref{ContJul2} cannot be used for fractal mapping along the solutions of the differential equation (\ref{ODE2}). However, for the moments of time $2\pi n$, $n=1,2,\ldots,$ which are multiples of the period, the group property is valid, and therefore iterations by Eq. \ref{ContJul2} determine a fractal dynamics at the discrete moments. In the future, finding conditions to construct fractals by non-autonomous systems might be an interesting theoretical and application problem. We have applied the map  with $ a=0.01 $ and the Julia set corresponding to $ c=-0.175-0.655i$ as the initial fractal. The results of the simulation are seen in Fig. \ref{ContDynPeriod}. Since the moment $ t = \frac{\pi}{2} $ is not a multiple of the period, the section in part $(b)$ of the figure does not seem to be a fractal, but in part $(c),$ the section is a Julia set.

\section*{\normalsize \color{red}CONCLUSION}

Despite the intensive research of fractals lasts more than 35 years \textit{\cite{Mandelbrot0}}, there are still no results on mapping of the sets, and our paper is the first one to consider the problem. To say about mathematical challenges connected to our suggestions, let us start with topological equivalence of fractals and consequently, normal forms. Differential and discrete equations will be analyzed with new methods of fractal dynamics joined with dimension analysis. Next, the theory for dynamical systems which is defined as iterated maps can be developed. Therefore, mapping of fractals will be beneficial for new researches in hyperbolic dynamics, strange attractors, and ergodic theory \textit{\cite{Wiggins88,Guckenheimer80}}. The developed approach will enrich the methods for the discovery and construction of fractals in the real world and industry such as nano-fiber engineering, 3D printing, biotechnologies, and genetics \textit{\cite{Vehel,Liu,Cattani,Noorani}}.

\begin{figure}[H]
	\centering
	{\setlength{\fboxsep}{0pt}%
	\setlength{\fboxrule}{1.5pt}%
	\fcolorbox{red}{white}{
	\begin{minipage}{0.46\textwidth}
			\subfigure[$ A_t \mathcal{J} $]{\includegraphics[width = 3.2in]{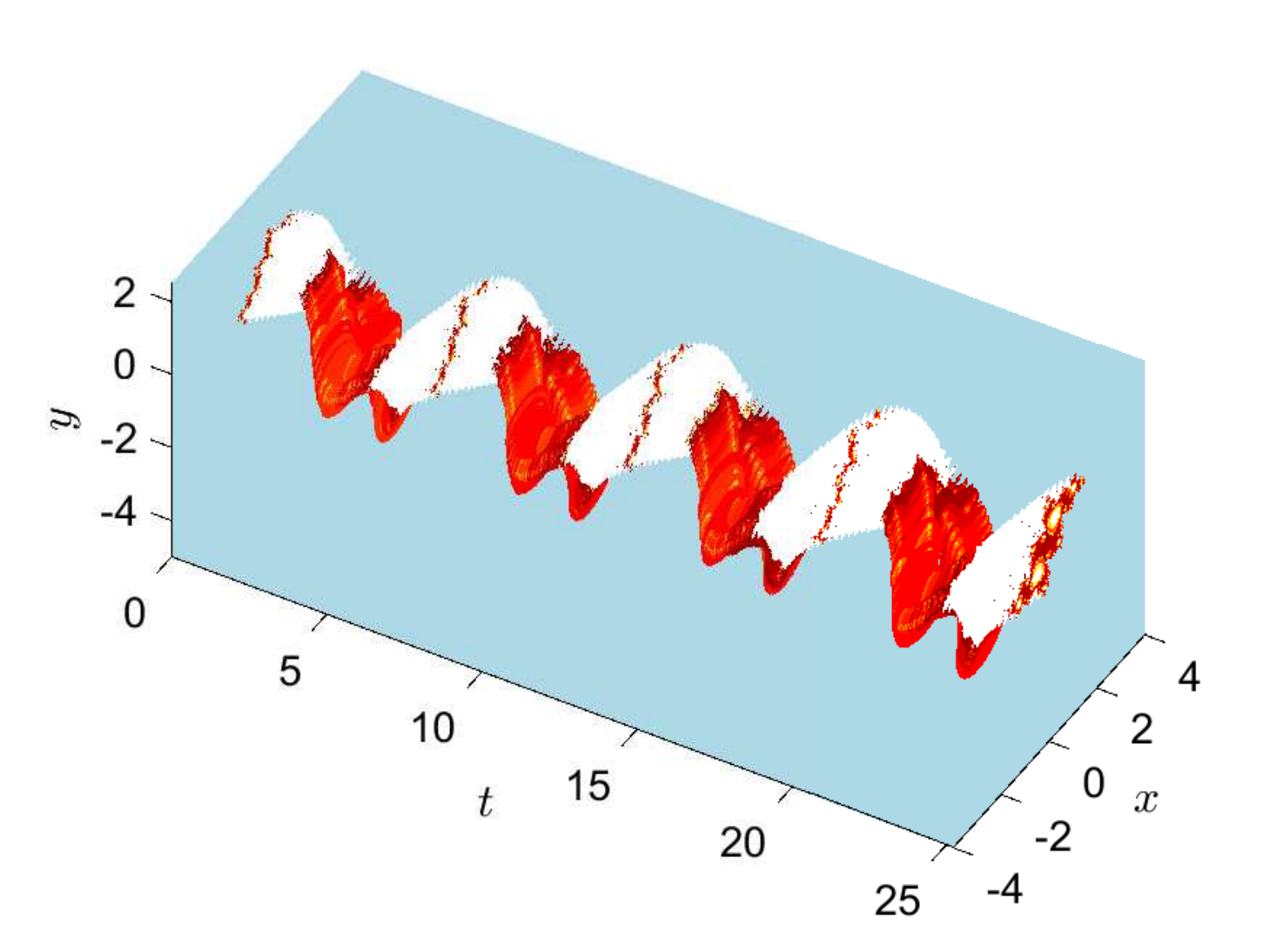}\label{ContDynPeriodT}}\\
			\subfigure[$ t=\frac{\pi}{2} $]{\includegraphics[width = 1.6in]{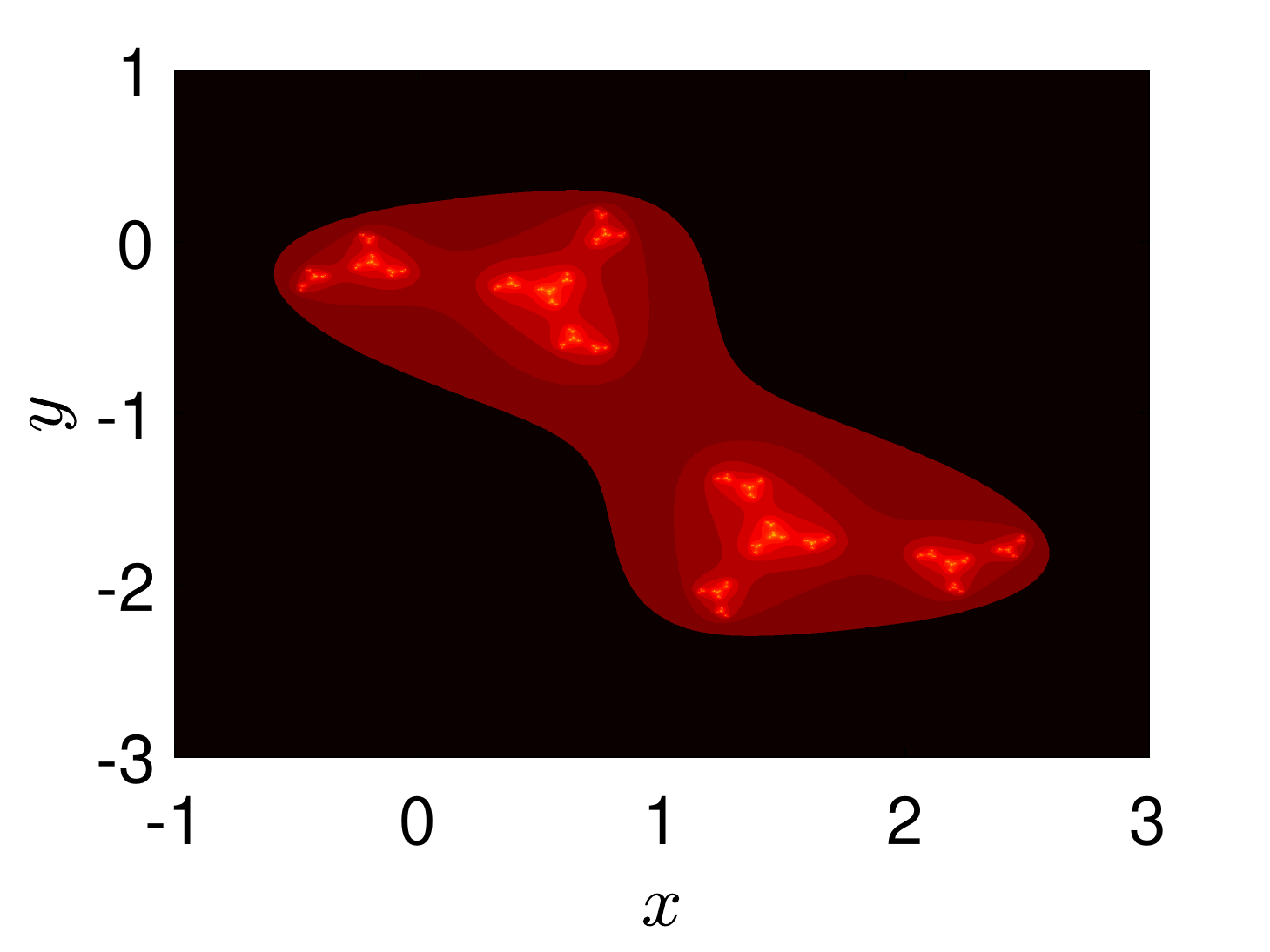}\label{ContDynPeriodS1}}
			\hspace{-0.2cm}
			\subfigure[$ t=2 \pi $]{\includegraphics[width = 1.6in]{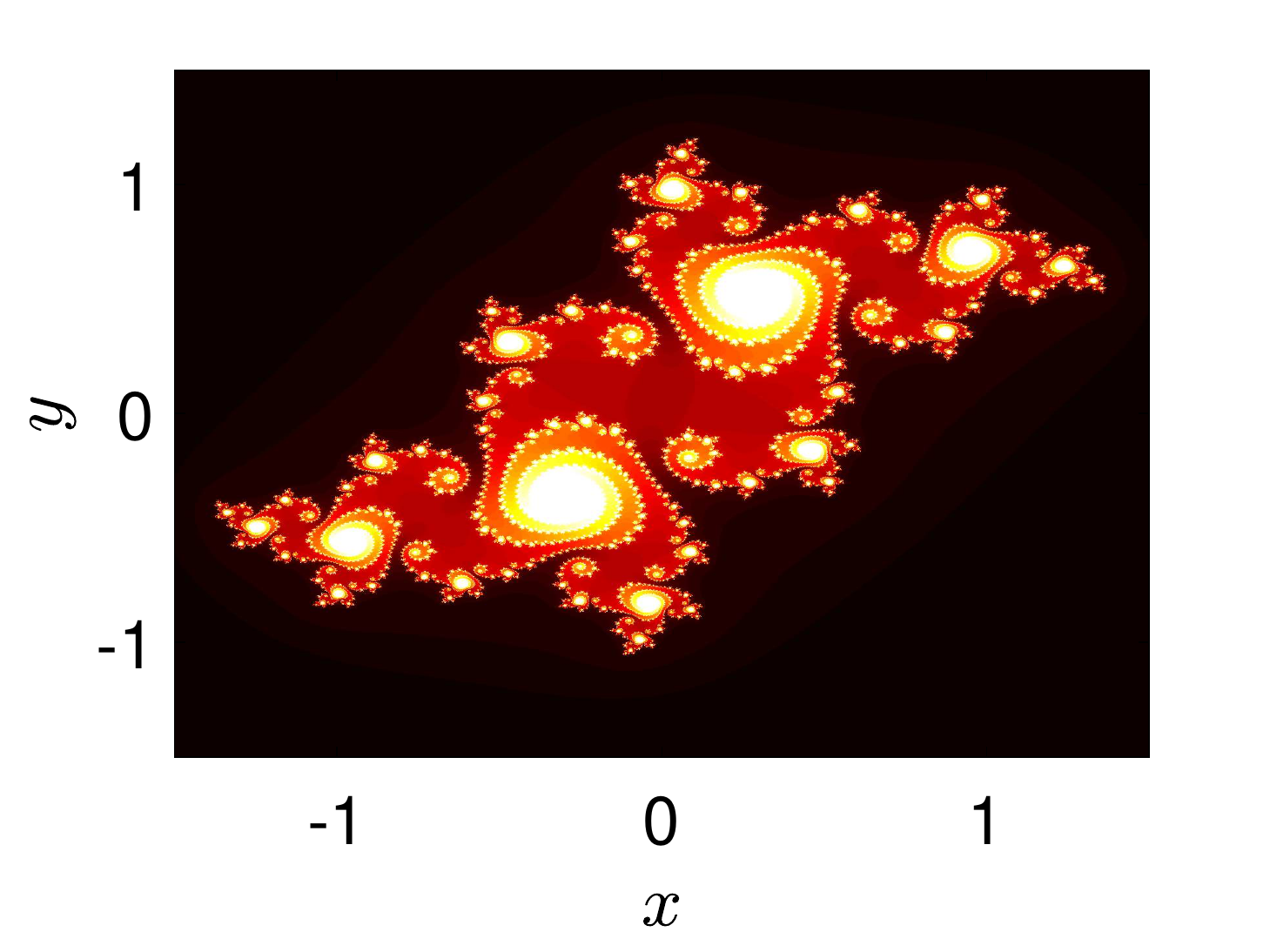}\label{ContDynPeriodS2}}
	\end{minipage}}}
	\caption{The parametric set  and its sections}
	\label{ContDynPeriod}   				
\end{figure}

\renewcommand{\refname}{\normalfont\selectfont\normalsize REFERENCES}
\color{red}

\section*{\normalsize \color{red}Acknowledgments}
\color{black}
The third author is supported by a scholarship from the Ministry of Education, Libya.

\end{multicols}	

\end{document}